\numberwithin{equation}{section}
\newtheorem{theorem}{Theorem}[section]
\newtheorem{lemma}[theorem]{Lemma}
\newtheorem{proposition}[theorem]{Proposition}
\newtheorem{corollary}[theorem]{Corollary}
\theoremstyle{definition}
\newtheorem{definition}[theorem]{Definition}
\theoremstyle{remark}
\newtheorem{remark}[theorem]{Remark}
\numberwithin{equation}{section}
\newcommand{\ow}{\omega}
\newcommand{\p}{\partial}
\DeclareMathOperator{\RFH}{RFH}
\DeclareMathOperator{\FC}{FC}
\DeclareMathOperator{\FH}{FH}
\newcommand{\R}{{\mathbb{R}}}
\newcommand{\Z}{{\mathbb{Z}}}
\newcommand{\N}{{\mathbb{N}}}
\renewcommand{\epsilon}{\varepsilon}
\renewcommand{\theta}{\vartheta}
\DeclareMathOperator{\crit}{crit}
\DeclareMathOperator{\loc}{loc}
\DeclareMathOperator{\RS}{RS}
\DeclareMathOperator{\im}{im}
\begin{document}
\title[Bifurcations of symmetric periodic orbits via Floer homology]{Bifurcations of symmetric periodic orbits via Floer homology}
\author{Joontae Kim, Seongchan Kim and Myeonggi Kwon}
 \address{School of Mathematics, Korea Institute for Advanced Study, 85 Hoegiro, Dongdaemun-gu, Seoul 02455, Republic of Korea}
 \email {joontae@kias.re.kr}

 \address{Institut de Math\'ematiques, Universit\'e de Neuch\^atel, Rue Emile-Argand 11, 2000 Neuch\^atel, Switzerland}
 \email {seongchan.kim@unine.ch}

  \address{Mathematisches Institut, Justus-Liebig-Universit\"at Gie\ss en, Arndtstra\ss e 2, 35398 Gie\ss en, Germany}
 \email {Kwon.Myeonggi@math.uni-giessen.de}
\subjclass[2010]{Primary: 37J20; Secondary:  53D40}
\keywords{bifurcation, families of symmetric periodic orbits, reversible Hamiltonian systems}

\date{Recent modification; \today}
\setcounter{tocdepth}{2}
\maketitle
\begin{abstract}
 We give criteria for the existence of bifurcations of   symmetric  periodic orbits in reversible Hamiltonian systems in terms of local equivariant Lagrangian Rabinowitz Floer homology.  As an example, we consider the family of the direct circular orbits in the rotating Kepler problem and observe bifurcations of torus-type orbits. Our setup is motivated by   numerical work of H\'enon on Hill's lunar problem. 
\end{abstract}
\tableofcontents

\section{Introduction}

The study of periodic orbits  gives  crucial insight into dynamical systems as Poincar\'e regarded them as   the ``skeleton" of dynamical systems. An interesting behavior of periodic orbits can occur when periodic orbits appear in one-parameter families. Namely, under certain circumstances, the family of periodic orbits loses their stability and gives birth to new periodic orbits as the parameter varies. Such a phenomenon is called \emph{bifurcation}. Specifically, we are interested in bifurcations of symmetric periodic orbits in reversible Hamiltonian systems, inspired by H\'enon's numerical work on Hill's lunar problem. 

 In 1878, Hill gave a new formulation of the lunar theory that he obtained from the planar circular restricted three-body problem \cite{Hill}:  Since the Sun is much heavier than the Earth and the Moon, and since the distance between   Sun and Earth is much bigger than that between   Earth and Moon, we may blow up coordinates near the Earth in the planar circular restricted three-body problem and obtain the Hamiltonian of Hill's lunar problem  $F: T^* ( \R^2 \setminus \left \{ 0 \right \}) \rightarrow \R$
$$
F(q,p) = \frac{1}{2} |p|^2 - \frac{1}{|q|} + q_1 p_2 - q_2 p_1 - q_1^2 + \frac{1}{2}q_2^2 .
$$
For more details on Hill's lunar problem, we refer the reader to the recent book \cite[Section 5.8]{book}. 

An interesting feature of this problem is that $F$ admits the two commuting antisymplectic involutions 
\begin{equation}\label{eq:HIll}
\rho_1(q_1, q_2, p_1, p_2) = (q_1, -q_2, -p_1, p_2), \quad \rho_2 (q_1, q_2, p_1, p_2) = (-q_1 , q_2, p_1, -p_2).
\end{equation}
 In \cite{Henon}, by a numerical  exploration  H\'enon has found the family ``$g$"  of periodic orbits symmetric with respect to both involutions, whose    parameter is given by the energy $F=c$. This family converges to the direct circular orbit as $c \rightarrow - \infty$.  Increasing  the energy value from a very negative one, one finds an energy value $c=c_*$, which is close to  the unique critical value of $F$ and at which the family $g$ becomes degenerate. 
At the moment $c=c_*$,  a pitchfork bifurcation happens, that is, two families of  (elliptic) periodic orbits bifurcate. H\'enon   observed that the two families are $\rho_1$-symmetric, but not $\rho_2$-symmetric, and that  they are related by the $\rho_2$-symmetry.  
This  is the so-called \emph{symmetry-breaking}.

Bifurcations of symmetric periodic orbits have been studied using different techniques, see for example  \cite{CV98, CV04, DE76, LT09, V86, V92, Kaz13}.      
In \cite{DX}, Deng and Xia study bifurcations by looking at generating functions of Hamiltonian diffeomorphisms whose critical points correspond to periodic orbits.

The purpose of the present  paper is to prove criteria for the occurrence of  bifurcations of symmetric periodic orbits in reversible Hamiltonian systems using Floer theory. 
While finding periodic  orbits
and chords of Hamiltonian systems and Reeb flows by Floer theory is a traditional topic, using Floer homology for the study of bifurcations is a fresh approach.
We use \emph{local equivariant Lagrangian Rabinowitz Floer homology}, which we abbreviate by \emph{local eLRFH}. Of particular importance in the argument is  the invariance property of local eLRFH under Hamiltonian perturbations.  We shall explain that given a one-parameter family of symmetric  periodic orbits satisfying certain conditions, if (a multiple cover of) an orbit in the family becomes degenerate, then the invariance of local eLRFH of the degenerate orbit forces the family to undergo a bifurcation, i.e.,   new  symmetric periodic orbits are born. 
 
 We should mention that the local bifurcation results under consideration can be obtained by more elementary approach using a finite dimensional reduction. We explain its idea in the case of periodic orbits. A similar construction for symmetric periodic orbits works as well. Consider a periodic orbit $\gamma$ of a Hamiltonian $H$ in a symplectic manifold $T^*N$ at which   bifurcation occurs. Its neighborhood is identified with $(\R/\Z\times \R^{2n-1},\sum_{i=1}^ndp_i\wedge dq_i)$, and   periodic orbits in this neighborhood naturally lift to loops in $\R^{2n}$. We consider a smooth nonlinear operator
 $$
 L\colon W^{1,2}(S^1, \R^{2n})\to L^2(S^1,\R^{2n}),\quad L(\gamma)=-J\dot{\gamma}+\nabla H(\gamma),
 $$
 whose solutions $L(\gamma)=0$
 correspond to \emph{smooth} periodic orbits by the Sobolev embedding theorem. Performing the saddle point reduction, see for instance \cite[Sections 2 and 12]{AmanZehnder}, we can transform this infinite dimensional problem into the problem of finding critical points of a functional defined on a finite dimensional space. See for example \cite[Proposition 2.1]{AmanZehnder} or \cite[Lemma 2.3]{CZ} for a detailed statement. On this reduced problem, critical points of the functional have well-defined Morse index, and hence the standard Morse theory can detect   bifurcation when we have a jump of the Morse index in a one-parameter family of critical points. 
 
For another approach, studying chords of (fiberwise convex) Hamiltonians on the cotangent bundle can be translated under the Fenchel transform into studying critical points of the free period Lagrangian action functional as in \cite[Proposition 2.2]{Asselle16}. See also \cite{Cont06}. In particular bifurcation phenomena could also be captured by an elementary Morse theoretical argument with respect to the Lagrangian action functional.

The possibility of an application of elementary methods does not weaken the value of our Floer theoretic approach, as it potentially opens up  possibilities to attack global bifurcation questions.

\section{Statement of the results}\label{sec:main}

\subsection{Setup}\label{sec: introsetup}
Let $N$ be a  connected $n$-dimensional smooth manifold equipped with an involution $f$. 
Assume that the fixed point set $Q:= \mathrm{Fix}(f) \subset N$ is non-empty and connected. It is well-known that $Q$ is a submanifold of $N$, see for instance \cite[Section 2.7]{book}.
Define the smooth map $\rho \colon T^*N \to T^*N$ as  
\[
\rho := d_* f \circ I = I \circ d_*f,
\]
where $d_*f \colon T^*N \to T^*N$ is the cotangent lift of $f$ and $I\colon T^*N \to T^*N, (q,p) \mapsto (q,-p)$.
It is easy to see that the map  $\rho$  is an exact anti-symplectic involution on $T^*N$, namely, it satisfies $\rho^* \lambda = -\lambda$, where $\lambda = pdq $ is the standard Liouville one-form on $T^*N$.
Its fixed point set equals the conormal bundle of $Q$
\[
 \mathrm{Fix}(\rho) = N^*Q := \{ (q,p) \in T^*N\mid q \in Q, p \in T_q^*N, \left< p, v\right> =0, \forall v \in T_q Q   \},
\]
which is a   connected  Lagrangian submanifold in $(T^*N, d\lambda)$, see for example \cite[Section 2.7]{book}.
In particular, $\lambda$ vanishes on $\mathrm{Fix}(\rho)$ which implies that the Lagrangian $L$ is exact.

Let $\Sigma \subset T^*N$ be a fiberwise starshaped hypersurface, meaning that  $\Sigma \cap T_q^*N$ bounds a smooth compact starshaped (with respect to the origin) domain in $T_q ^*N$ for every $ q \in N$.  
We assume that $\Sigma$ is invariant under $\rho $ and intersects   $N^*Q $.  Note that the intersection is transverse, and   the intersection $  \mathcal{L}= \Sigma \cap N^*Q$   is a Legendrian submanifold  in the contact manifold $(\Sigma, \alpha)$, where $\alpha = \lambda |_{\Sigma}$ is the restriction of $\lambda$ to   $\Sigma$.
 Let $F\colon T^*N \to \R$ be a Hamiltonian which is invariant under $\rho$ such that $\Sigma$ is a regular energy level set, say $\Sigma = F^{-1}(0)$.
In particular, every Hamiltonian orbit of $F$  on $\Sigma$ is a reparametrization of an   orbit of the Reeb vector field    $R  = R_{\alpha}$   associated to the contact form $\alpha$.

Let $(c,\eta)$ be a Reeb chord  with endpoints in $\mathcal{L}$, that is, $c \colon [0,\eta] \to \Sigma$ satisfies  
\[
   \dot{c}(t) = R(c(t)), \quad c(0), c(\eta) \in \mathcal{L}.
\]
It is easy to check that $\rho^* R = -R$, so that we have $c(\eta) = \rho ( c(0) )$.
In view of  the invariance of $\Sigma$ under  $\rho$  we obtain another Reeb chord $(\overline{c}  , \eta)$, defined by $\overline{c} (t):=\rho \circ c(\eta-t)$. 
An associated \textit{symmetric periodic orbit} $(x_c, 2 \eta)$ is a periodic Reeb orbit $x_c \colon [0,2\eta] \to \Sigma$ defined by
\[
x_c(t):= \begin{cases} c(t) & t \in [0,\eta], \\\overline{c} (t) & t \in [\eta,2\eta]. \end{cases}
\]
 Reeb chords come  in   pairs and there is a one-to-one correspondence between the sets of such pairs and of symmetric periodic orbits.
We call a Reeb chord $(c,\eta)$ \emph{simple} if the associated symmetric periodic orbit $(x_c , 2\eta)$ is simple.
If $(x_c, 2\eta)$ is the $n$-fold cover of $(x_{\mathfrak{c}}, 2\ell)$, that is $(x_c , 2\eta) = (x_{\mathfrak{c}}^n, 2n\ell)$, associated to a   simple Reeb chord  $(\mathfrak{c},\ell)$, then 
we say that $(c,\eta)$ is the $n$-fold cover of $(\mathfrak{c}, \ell)$ and write    $(c, \eta) = (\mathfrak{c}^n, n\ell)$.

\subsection{Bifurcations} \label{sec:bifurc}We now look at several bifurcation scenarios of symmetric periodic orbits. Let $F_\tau\colon T^*N \to \R$, $\tau\in(-\epsilon,\epsilon)$ be a smooth family of $\rho$-invariant Hamiltonians where $\epsilon > 0$  is   small.
 Assume that $0$ is a regular value of $F_{\tau}$ for every ${\tau}$.
Suppose further that every energy level  $\Sigma_{\tau}:=F_{\tau}^{-1}(0)$ is   fiberwise starshaped and transverse to $N^*Q$ and that $\Sigma_{\tau} \cap N^*Q \neq \emptyset$.  

Let $\{(c_{\tau}, \eta_{\tau}),(\bar{c}_\tau,\eta_{\tau})\}_{{\tau} \in (-\epsilon,\epsilon)}$ be a smooth family of  $\Z_2$-pairs of Reeb chords  such that  $\im(c_{\tau}) \subset\Sigma_\tau$ for every $\tau$. 
For $\tau_0 \in (-\epsilon, \epsilon)$, the sub-family  $\{ (  x_{c_{\tau}} ,2\eta_{\tau})\}_{{\tau} \in (-\epsilon,\tau_0]}$  of  the family $\{ (  x_{c_{ \tau}} ,2\eta_{\tau})\}_{{\tau} \in (-\epsilon,\epsilon)}$ of the associated $\rho$-symmetric periodic orbits is called    \emph{uniformly isolated}
if the   sub-family   $\{(c_{\tau}, \eta_{\tau}),(\bar{c}_\tau,\eta_{\tau})\}_{{\tau} \in (-\epsilon,\tau_0]}$ is uniformly isolated in the sense of Definition \ref{def: unifomlyisolated}. See also Figure \ref{Fig:unifiso}. 
This means that  we can take a  neighborhood $U=U_1 \times U_2 \subset T^*N \times \R$ of $\{(x_{c_{\tau}}, 2\eta_{\tau})\}_{\tau \in (-\epsilon, \tau_0]}$ 
such that for each $\tau \in (-\epsilon, \tau_0]$, $(x_{c_{\tau}}, 2\eta_{\tau})$ is the unique symmetric periodic orbit  on $\Sigma_{\tau}$ whose   image is contained in $U_1 \cap \Sigma_{\tau}$ and whose length is contained in $U_2$.

The first bifurcation scenario we look at is the following. Below, the index $\mu$ is the Robbin--Salamon index with a global shift, see \eqref{eq: maslovindex}.
\begin{theorem}\label{thm: intromainthm} Suppose that the smooth  family $\{ (  x_{c_{ \tau}} ,2\eta_{\tau})\}_{{\tau} \in (-\epsilon,\epsilon)}$ of   $\rho$-symmetric periodic orbits satisfies the following conditions.
\begin{itemize}
	\item The sub-family    $\{ (  x_{c_{ \tau}} ,2\eta_{\tau})\}_{{\tau} \in (-\epsilon,0]}$   is uniformly isolated.
	\item $(c_\tau,\eta_\tau)$ is non-degenerate for all $\tau\ne 0$.
	\item $(c_0,\eta_0)$ is degenerate, and at $\tau=0$ the   index $\mu(c_\tau,\eta_\tau)$ changes.
\end{itemize}
Then there exists $\delta \in (0, \epsilon)$ for which the following assertions hold.
\begin{enumerate}
\item
For each $\tau \in (0,\delta) $, we find another $\rho$-symmetric periodic orbit  $(x_{d_{\tau}}, 2\sigma_{\tau})$ such that $\im (x_{d_{\tau}}) \subset \Sigma_{\tau}$.
It tends to $ (x_{c_0},2\eta_0)  $ as $ \tau \to 0^+$.

\item 
If  $ (x_{c_\tau},2 \eta_\tau) = (x_{\mathfrak{c}_\tau}^n ,2 n \ell _\tau)$  for all $\tau \in (-\epsilon, \epsilon)$, where   $( {\mathfrak{c}}_\tau,  \ell_\tau)$'s are   simple Reeb chords such that     $ (\mathfrak{c}^m_\tau , m \ell _\tau) $ is non-degenerate for   every $\tau \in (-\epsilon, \epsilon)$ and   for every $m <n$, then the  $\rho$-symmetric periodic orbit $(x_{d_{\tau}}, 2\sigma_{\tau})$ obtained in $\mathrm{(1)}$  is simple for all $\tau \in (0, \delta)$.
\end{enumerate}
\noindent  
Moreover, if  we assume that all Reeb chords   with image in $\Sigma_{\tau} \cap U_1$ and with lengths in $U_2$ for $\tau \in ( 0, \delta)$ are non-degenerate, where  $U = U_1 \times U_2 \subset T^*N \times \R$ is an isolating neighborhood of  $\{ (  x_{c_{ \tau}} ,2\eta_{\tau})\}_{{\tau} \in (-\epsilon,0]}$,
and that $\Sigma_{\tau}$ is compact for all $\tau \in  ( 0, \delta)$, 
 then there exists $\delta_0 \in (0, \delta]$ such that there exist  two geometrically distinct one-parameter smooth families $\{ (x_{d_{\tau}}, 2\sigma_{\tau})\}_{\tau \in (0, \delta_0)}$ and $\{ (x_{d_{\tau}'}, 2\sigma_{\tau}')\}_{\tau \in (0, \delta_0)}$ of $\rho$-symmetric periodic orbits 
having the properties described in $\mathrm{(1)}$ and $\mathrm{(2)}$.
If $\mu( c_{\tau}, \eta_{\tau}) =k$ for $\tau <0$ and $\mu(c_{\tau}, \eta_{\tau}) = k+p$ with some non-zero integer $p$ for $\tau >0$, then for every $\tau \in (0, \delta_0)$ we have 
$\mu( d_{\tau}, \sigma_{\tau}) = k$ and $\mu( d_{\tau}' , \sigma_{\tau}' )  \in \{  k+p \pm 1  \}$.

\end{theorem}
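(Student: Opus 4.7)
The plan is to exploit the invariance of local eLRFH under Hamiltonian perturbations, together with the jump of the Robbin--Salamon index, to force the appearance of extra $\rho$-symmetric orbits.

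First, I attach to the uniformly isolated sub-family $\{(x_{c_\tau}, 2\eta_\tau)\}_{\tau \in (-\epsilon, 0]}$ the local eLRFH computed in the isolating neighborhood $U = U_1 \times U_2$. For $\tau \in (-\epsilon, 0)$ the $\Z_2$-pair $\{(c_\tau,\eta_\tau),(\bar c_\tau,\eta_\tau)\}$ is non-degenerate, so the equivariant chain complex has a single generator and the local eLRFH is of rank one supported in degree $\mu(c_\tau,\eta_\tau) = k$. Since the family stays uniformly isolated when extended slightly to the right (after possibly shrinking $\delta$), the invariance property yields that for every $\tau \in (-\epsilon,\delta)$ the local eLRFH computed in $U$ is still of rank one in degree $k$.

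Now let $\tau \in (0,\delta)$. The hypothesis says $(c_\tau,\eta_\tau)$ is non-degenerate with shifted index $k+p$, $p \ne 0$. If this were the only chord in $U$, the local eLRFH would sit in degree $k+p \ne k$, contradicting invariance. Hence additional chords must exist in $U$, yielding the new $\rho$-symmetric periodic orbit $(x_{d_\tau}, 2\sigma_\tau)$ of conclusion (1); the convergence $(x_{d_\tau},2\sigma_\tau) \to (x_{c_0},2\eta_0)$ as $\tau \to 0^+$ follows from the uniqueness of the $\tau \le 0$ isolated family together with the compactness of $\overline U$. For conclusion (2), the assumption that every lower cover $(\mathfrak c_\tau^m, m\ell_\tau)$, $m < n$, is non-degenerate for all $\tau$ ensures that each such cover is isolated and can be separated from $(x_{c_0}, 2\eta_0)$ by shrinking $U$; consequently a newly born orbit inside $U$ cannot be a lower cover and must itself be simple.

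For the \emph{Moreover} statement, the added non-degeneracy hypothesis and the compactness of $\Sigma_\tau$ make the eLRFH chain complex a finite vector space whose generators each sit in a single degree. To realise invariant homology of rank one in degree $k$ while $(c_\tau,\eta_\tau)$ lives in degree $k+p$, chain-level bookkeeping requires at least two additional generators: one, $(d_\tau,\sigma_\tau)$, in degree $k$ to represent the surviving homology class, and another, $(d_\tau',\sigma_\tau')$, in degree $k+p+1$ or $k+p-1$ that pairs with $(c_\tau,\eta_\tau)$ through the Floer differential and cancels it in homology. Smoothness of the two families in $\tau$ follows from the implicit function theorem applied at each non-degenerate parameter value, yielding the two geometrically distinct families with the stated indices. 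The main obstacle is to set up and establish the invariance of local eLRFH in this $\Z_2$-equivariant Lagrangian Rabinowitz setting: one needs equivariant compactness and transversality for continuation maps between the Floer complexes associated with $\tau$-perturbed Hamiltonians in a shrinking neighborhood of the degenerate orbit, and this analytic backbone is precisely what the local eLRFH framework developed later in the paper has to deliver.
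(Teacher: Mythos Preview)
Your overall strategy is the same as the paper's: compute the local eLRFH of the isolated pair $\{(c_0,\eta_0),(\bar c_0,\eta_0)\}$ using $F_\tau$ with $\tau<0$ as a non-degenerate perturbation (Proposition~\ref{prop: eRFHcomputation} gives $\Z_2$ concentrated in degree $k$), invoke the invariance of Proposition~\ref{prop: invariance}, and derive a contradiction from the index jump if $(c_\tau,\eta_\tau)$ were the only chord in $U$ for small $\tau>0$. The chain-level bookkeeping in the \emph{Moreover} part (one survivor in degree $k$, one chord in degree $k+p\pm1$ cancelling $(c_\tau,\eta_\tau)$) and the appeal to the implicit function theorem also match the paper.

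Two places deserve tightening. First, the sentence ``the family stays uniformly isolated when extended slightly to the right'' is not the justification and is in fact what you want to disprove; the paper never extends isolation to $\tau>0$. What is used is that \emph{if} along some sequence $\tau_\nu\to0^+$ the pair $\{(c_{\tau_\nu},\eta_{\tau_\nu}),(\bar c_{\tau_\nu},\eta_{\tau_\nu})\}$ were the only chords in $U$, then $F_{\tau_\nu}$ is an admissible non-degenerate perturbation of $F_0$ and Proposition~\ref{prop: invariance} applies. Your next sentence runs exactly this contradiction, so the argument survives once the misleading clause is dropped.

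Second, your argument for assertion~(2) has a gap. Saying that the lower covers $(\mathfrak c_\tau^m,m\ell_\tau)$, $m<n$, can be separated from $U$ only rules out $(d_\tau,\sigma_\tau)=(\mathfrak c_\tau^m,m\ell_\tau)$; it does not by itself prevent $(d_\tau,\sigma_\tau)$ from being a $p$-th cover ($p>1$) of some \emph{other} simple chord $(\mathfrak d_\tau,\mathfrak s_\tau)$. The paper closes this by observing that the underlying simple family $\{(x_{\mathfrak d_\tau},2\mathfrak s_\tau)\}$ would then have to bifurcate from $(x_{\mathfrak c_0}^{n/p},2(n/p)\ell_0)$, contradicting the assumed non-degeneracy of $(\mathfrak c_\tau^{n/p},(n/p)\ell_\tau)$ for $n/p<n$.

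For the \emph{Moreover} clause the paper supplies two further ingredients you do not mention: an Arzel\`a--Ascoli argument showing that for $\tau\in[0,\delta_0)$ no chord meets $\partial\overline U$ (so the IFT-continued families stay inside $U$), and the $\omega$-limit set result of \cite{BFvK19} to conclude that each new smooth family converges to $(c_0,\eta_0)$ as $\tau\to0^+$.
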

 \begin{figure}[h]
\begin{center}
\begin{tikzpicture}[scale=0.9]
\draw[thick] (-3,0) to [out=50, in=180] (-2,0.5);
\draw[thick] (-2,0.5) to [out=0, in=180] (1.4,0.2);
\draw[thick] (1.4,0.2) to [out=0, in=200] (3,0.4);
  \draw [fill] (0.5,0.24) circle [radius=0.08];
\draw[thick] (0.5, 0.24) to [out=70, in=185] (2.5,1);
\node at (-1.5, 0.9) {$ \small{\{(x_{c_\tau},2\eta_\tau) \}}$};
\node at (2.5, 1 )[right] {$ \small{\{(x_{d_\tau},2\sigma_\tau)\}}$};
\node at (2.5, -0.8)[right] {$ \small{\{(x_{d'_\tau},2\sigma'_\tau) \}}$};

\draw[thick] (0.5, 0.24) to [out=290, in=170] (2.5,-0.8);

\end{tikzpicture}
\end{center}
\label{fig:theorem1}
\end{figure}

\noindent

 \begin{remark}  \label{rmk:index} We make the following remarks on the hypothesis of Theorem \ref{thm: intromainthm}.
 \begin{enumerate}
 \item  Since any two non-degenerate Reeb chords which are connected via a smooth family of non-degenerate Reeb chords must have the same Robbin-Salamon index, the nondegeneracy condition above implies that the index $\mu(c_{\tau}, \eta_{\tau})$ is constant on the interval  $(-\epsilon, 0)$ and the interval $(0, \epsilon)$, respectively.  
 \item The nondegeneracy of a Reeb chord $(c, \eta)$ is equivalent to the nondegeneracy of $(\overline{c},\eta)$.
 If they are non-degenerate, then their Robbin-Salamon indices coincide.
 \item By definition of the Robbin-Salamon index,   if the index changes by $m$, then we have $m \in [- n  +1,n-1]$, where $n=\dim N$,  see \cite{RSindex}.
 \end{enumerate}
\end{remark}
 


\begin{remark}

  One might take a closer look at a family $\{x _{1,{\tau}}\}$ of symmetric periodic orbits bifurcating from an original family $\{ x_{0,{\tau}}\}$ and detect an index change along the associated family of Reeb chords. Proceeding in a similar way, one then finds a new family $\{x_{2,{\tau}}\}$ bifurcating from $\{x_{1,{\tau}}\}$.  Iterating this process one obtains a \textit{bifurcation tree} in a given reversible Hamiltonian system. 
Since we have used a local method, we are not able to study global properties of bifurcation trees. 
In \cite{GMVF81} Greene, MacKay, Vivaldi, and Feigenbaum provide a specific example for which they have found a period-doubling tree via a surface of section and describes the asymptotic behaviour of the tree.
A geometric self-similarity of a bifurcation tree in some reversible systems is studied by Lamb and Roberts in \cite{LR98}.

\end{remark}

As an application of Theorem \ref{thm: intromainthm}, we now look at bifurcations of symmetric periodic orbits from families of doubly symmetric periodic orbits.
Namely, we study symmetry-breaking which can be  observed in Hill's lunar problem as mentioned in the introduction.
 Assume that $F_{\tau}\colon T^*N \to \R$, $\tau\in(-\epsilon,\epsilon)$ is a smooth family of Hamiltonians invariant under  two commuting exact anti-symplectic involutions $\rho_1, \rho_2$ and set $\Sigma_{\tau}$ as before.
Denote by $\mathcal{L}_{j,\tau}$ the associated (non-empty) Legendrian in $\Sigma_{\tau}$ for $j=1,2$.
A Reeb chord from $\mathcal{L}_{1,\tau}$ to $\mathcal{L}_{2, \tau}$ is a Reeb orbit $c \colon [0, \eta] \to \Sigma_{\tau}$ meeting the boundary condition $c(0) \in \mathcal{L}_{1,\tau},  c(\eta) \in \mathcal{L}_{2,\tau}$.
Closing it up by means of $\rho_1, \rho_2$, we obtain a doubly symmetric periodic orbit of period $4\eta$
\[
y_c(t)  = \begin{cases}     c(t) & t \in [0, \eta] ,\\ \rho_2 \circ c(2\eta - t) & t \in [\eta, 2 \eta], \\ \rho_1 \circ \rho_2 \circ c(t- 2\eta) & t \in [2\eta, 3\eta] ,\\ \rho_1 \circ c (4\eta-t) & t \in [3\eta, 4\eta].  \end{cases}
\]

  \begin{corollary}\label{thm: intromain}
     Let $\{ (y_{{\tau}}, 4 \eta_{\tau})\}_{{\tau} \in (-\epsilon, \epsilon)}$ be a smooth family of doubly symmetric periodic orbits such that   $\im (y_{{\tau}} )\subset \Sigma_{\tau}$ for every $\tau$. 
We choose a parametrization of each $y_{ {\tau}}$ such that $y_{{\tau}}(0)\in \mathcal{L}_{1,\tau}$ (and hence $y_{{\tau}}(\eta_{\tau}) \in \mathcal{L}_{2,\tau}$)   and $\{y_{{\tau}}(0)\}_{\tau\in (-\epsilon, \epsilon)}$ is smooth.  
Suppose that
  \begin{itemize}
   \item    The sub-family $\{(y_{ { {\tau}}}, 4\eta_{\tau})  \} _{\tau \in (-\epsilon, 0]}$   is uniformly isolated.
 \item For every ${\tau} \in (-\epsilon, \epsilon)$, each Reeb chord $c_{1,{\tau}}  := y_{\tau} |_{[0, 2\eta_{\tau}]}$  is non-degenerate as a Reeb chord with endpoints in $\mathcal{L}_{1,{\tau}} $.
 \item The Reeb chord $c_{2,{\tau}}   := y_{\tau} |_{ [\eta_{\tau}, 3\eta_{\tau}]}$  is non-degenerate as a Reeb chord with endpoints in $\mathcal{L}_{2,{\tau}}$ for all $\tau \neq 0$,
 and  the  index $\mu( c_{2,{\tau}}, 2 \eta_{\tau})$ changes at $\tau=0$.
 \end{itemize}
 Then there exists $\delta \in (0, \epsilon)$ for which the following assertions hold.
\begin{enumerate}
\item
For each $\tau \in (0,\delta) $, we find two additional   $\rho_2$-symmetric  periodic orbits $ ( x_{d_{\tau}}, 2\sigma_{\tau} )  $ and $ ( \rho_1 \circ x_{d_{\tau} }, 2\sigma_{\tau}  )  $, but   not $\rho_1$-symmetric.  
They tend to $ (y_{ 0},4\eta_0)  $ as $ \tau \to 0^+$.

\item 
If  $ (c_{2, \tau},2 \eta_\tau) = (  \mathfrak{c}_\tau^n ,2 n \ell _\tau)$  for all $\tau \in (-\epsilon, \epsilon)$, where   $( {\mathfrak{c}}_\tau,  \ell_\tau)$'s are   simple Reeb chords with endpoints in $\mathcal{L}_{2,\tau}$ such that     $ (\mathfrak{c}^m_\tau , m \ell _\tau) $ is non-degenerate for   every $\tau \in (-\epsilon, \epsilon)$ and   for every $m <n$, then the  $\rho_2$-symmetric periodic orbits $(x_{d_{\tau}}, 2\sigma_{\tau})$ and $(\rho_1 \circ x_{d_{\tau}} , 2\sigma_{\tau} )$  are simple for all $\tau \in (0, \delta)$.

\end{enumerate}
\noindent  
We assume further that all Reeb chords   with image  in $\Sigma_{\tau} \cap U_1$  and with lengths in $U_2$ for $\tau  \in (0, \delta)$ are non-degenerate, where  $U = U_1 \times U_2 \subset T^*N \times \R$ is an isolating neighborhood of  $\{ (  y_{ { \tau}} ,4\eta_{\tau})\}_{{\tau} \in (-\epsilon,0]}$,
 that $\Sigma_{\tau}$ is compact for all $\tau \in (0, \delta)$, 
 and that $\mu( c_{2,\tau}, 2\eta_{\tau}) =k$ for $\tau <0$ and $\mu(c_{2,\tau}, 2\eta_{\tau}) = k+p$ with some non-zero integer $p$ for $\tau >0$.
 Then   there exists $\delta_0 \in (0, \delta]$ such that there exist   $2p$ smooth one-parameter families  $\{ (x_{d_{\tau}^j}, 2\sigma_{\tau}^j)\}_{\tau \in (0, \delta_0)}$,  $\{ (\rho_1 \circ x_{d_{\tau}^j}, 2\sigma_{\tau}^j)\}_{\tau \in (0, \delta_0)}$ for $j=0,1, \ldots, p-1$
 of   $\rho_2$-symmetric periodic orbits, but not $\rho_1$-symmetric, having the properties described in $\mathrm{(1)}$ and $\mathrm{(2)}$.
 For each $j$, we have $\mu(d_{\tau}^j, \sigma_{\tau}^j) = k +j$.

 \end{corollary}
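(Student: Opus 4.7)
The plan is to reduce Corollary~\ref{thm: intromain} to Theorem~\ref{thm: intromainthm} by viewing each doubly symmetric orbit $y_{\tau}$ as a $\rho_2$-symmetric periodic orbit of period $4\eta_{\tau}$ whose associated $\rho_2$-Reeb chord is precisely $c_{2,\tau}$, and then to exploit the non-degeneracy of the $\rho_1$-chord $c_{1,\tau}$ to force the claimed symmetry-breaking. Under the $\rho_2$-viewpoint, the family $\{(y_{\tau},4\eta_{\tau})\}$ satisfies all hypotheses of Theorem~\ref{thm: intromainthm}: the uniform isolation is inherited, $c_{2,\tau}$ is non-degenerate for $\tau\ne 0$, and $\mu(c_{2,\tau},2\eta_{\tau})$ jumps at $\tau=0$. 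Theorem~\ref{thm: intromainthm}(1) then yields, for $\tau\in(0,\delta)$, a new $\rho_2$-symmetric periodic orbit $(x_{d_{\tau}},2\sigma_{\tau})$ distinct from $y_{\tau}$ that converges to $y_0$.

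The crucial step is to show that $x_{d_{\tau}}$ is not $\rho_1$-symmetric. Suppose, for contradiction, that it were; then $x_{d_{\tau}}$ is doubly symmetric and carries a $\rho_1$-Reeb chord $c'_1$ that is $C^1$-close to $c_{1,0}$ for small $\tau>0$. Since $c_{1,\tau}$ is assumed non-degenerate for every $\tau$, the implicit function theorem applied to the parametric Reeb chord equation with Lagrangian boundary conditions on $\mathcal{L}_{1,\tau}$ produces, for each small $\tau$, a unique $\rho_1$-chord in a neighborhood of $c_{1,0}$, namely $c_{1,\tau}$ itself. Thus $c'_1=c_{1,\tau}$, which forces $x_{d_{\tau}}=y_{\tau}$, contradicting the bifurcation. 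Now the ambient $\rho_1$-symmetry produces $\rho_1\circ x_{d_{\tau}}$, which is $\rho_2$-symmetric (because $\rho_1$ and $\rho_2$ commute) but not $\rho_1$-symmetric, and geometrically distinct from $x_{d_{\tau}}$; this establishes assertion~(1). Assertion~(2) on simplicity transfers verbatim from Theorem~\ref{thm: intromainthm}(2) applied to the $\rho_2$-viewpoint, since a simple $\rho_2$-chord cannot become multiply covered under the bifurcation of a simple parent family.

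For the final multiplicity claim, the goal is to produce $p$ non-degenerate $\rho_2$-chord families with consecutive indices $k,k+1,\dots,k+p-1$ before doubling by $\rho_1$. This is a refinement of the ``two families at indices $k$ and $k{+}p{\pm}1$'' conclusion of Theorem~\ref{thm: intromainthm}, and must be extracted from the underlying Floer-theoretic machinery: the invariance of local eLRFH across $\tau=0$ combined with the extra hypothesis that every Reeb chord in the isolating neighborhood is non-degenerate for $\tau\in(0,\delta)$ forces the post-bifurcation local homology to agree rank-by-rank with its pre-bifurcation value. A level-by-level Morse-type count on the finite-dimensional saddle-point reduction then detects at least one non-degenerate chord at each intermediate index $k+j$, $j=0,\dots,p-1$, and the symmetry-breaking argument above supplies the $\rho_1$-image of each, yielding $2p$ families in total.

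The main obstacle is this last step: while the first two parts of the corollary are clean reductions to Theorem~\ref{thm: intromainthm} plus an implicit function theorem argument, pinning down the precise index $k+j$ of each bifurcating family requires going into the proof of Theorem~\ref{thm: intromainthm} rather than its statement, and exploiting both the persistence of local eLRFH and the post-bifurcation non-degeneracy hypothesis to rule out index collisions between the newly born families.
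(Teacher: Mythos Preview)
Your reduction of assertions (1) and (2) to Theorem~\ref{thm: intromainthm} via the $\rho_2$-viewpoint, together with the implicit function theorem argument ruling out $\rho_1$-symmetry of the new orbit, is correct and matches the paper's proof essentially verbatim.

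The gap is in the final multiplicity claim, and you have the causal structure backwards. You propose to first detect a chord at each intermediate index $k+j$ by a ``level-by-level Morse-type count'' and \emph{then} double by $\rho_1$. But the rank profile of the local eLRFH (one copy of $\Z_2$ in degree $k$, zero elsewhere) by itself does \emph{not} force a chord at every intermediate index: the post-bifurcation chain complex could perfectly well consist of one generator at index $k$, one at index $k+p-1$, and $c_{2,\tau}$ at index $k+p$, with the last two cancelling. This is exactly the conclusion of Theorem~\ref{thm: intromainthm}, and no appeal to a saddle-point reduction improves it.

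The paper's mechanism is the reverse: the $\rho_1$-doubling \emph{drives} the cascade. One starts with the generator $(d^0_\tau,\sigma^0_\tau)$ at index $k$; its $\rho_1$-image also has index $k$. Since $\FH_k^{\loc,\Z_2}$ is one-dimensional, one of these two index-$k$ chords must be killed by the differential from some chord $(d^1_\tau,\sigma^1_\tau)$ at index $k+1$. Doubling \emph{that} by $\rho_1$ gives two chords at index $k+1$, which again cannot both survive in homology, forcing a chord at index $k+2$; and so on up to index $k+p-1$, where $c_{2,\tau}$ at index $k+p$ closes the ladder. The $\rho_1$-symmetry is not an afterthought applied to an already-established list of indices; it is the engine of the inductive step, and without it you cannot get past the two families that Theorem~\ref{thm: intromainthm} provides.
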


   \begin{remark}\label{rmk:introRKP}
 One should be able to construct the local eLRFH associated to two commuting exact anti-symplectic involutions.
 In this case, one might look at bifurcations of doubly symmetric periodic orbits.
 This phenomenon is observed in the rotating Kepler problem, see Remark \ref{rmk:RKPdouble}. 
   \end{remark}

   \noindent
   Recall from the introduction that Hill's lunar problem carries the family of periodic orbits that are symmetric with respect to both $\rho_1$ and $\rho_2$ defined in \eqref{eq:HIll}.  
   A numerical experiment \cite{Ad20} shows that while the   associated half-chords with endpoints in $\mathrm{Fix}(\rho_1)$ are non-degenerate for all $c \in (c_*-\epsilon, c_*+\epsilon)$ for some $\epsilon>0$, the  ones  with endpoints in $\mathrm{Fix}(\rho_2)$  have index change at $c = c_*$.  Corollary \ref{thm: intromain} then tells us that  there exist two new families of $\rho_1$-symmetric periodic orbits that are permuted by $\rho_2$.  This is  precisely the same phenomenon as  H\'enon's numerical observation mentioned in the introduction.

   \smallskip
   
 To state the next scenario, we turn to the case of a single exact anti-symplectic involution $\rho$.

   \begin{theorem}\label{thm: intromainthm2}
 
 Suppose that a smooth family $\{ (  x_{c_{ \tau}} ,2\eta_{\tau})\}_{{\tau} \in (-\epsilon,\epsilon)}$ of   $\rho$-symmetric periodic orbits satisfies the following conditions.
\begin{itemize}
\item The Reeb chord $(c_0, \eta_0)$  is degenerate and all the other chords $(c_{\tau}, \eta_{\tau})$, $\tau \neq 0$, are non-degenerate. 
\item There exists another smooth family $ \{ (x_{c_{\tau}'}, 2\eta_{\tau}') \}_{\tau \in (-\epsilon, 0)}$ of non-degenerate  $\rho$-symmetric periodic orbits   which tends to  $ (x_{c_0}, 2 \eta_0)$ as $\tau \to 0^-$.
\item The sub-family $\{ (x_{c_{\tau} }, 2\eta_{\tau}), (x_{c_{\tau}'}, 2\eta'_{\tau}) \}_{\tau \in (-\epsilon, 0)} \cup \{ (x_{c_0}, 2\eta_0 )\} $ is uniformly isolated.  
\end{itemize}
Then there exists $\delta \in (0, \epsilon)$ satisfying that for each $\tau \in (0,\delta) $, we find another $\rho$-symmetric periodic orbit $(x_{d_{\tau}}, 2\sigma_{\tau})$ such that $\im (x_{d_{\tau}}) \subset \Sigma_{\tau}$.
It tends to $ (x_{c_0},2\eta_0)  $ as $ \tau \to 0^+$, and     assertion $\mathrm{(2)}$ of Theorem \ref{thm: intromainthm} holds.
Moreover, if  we assume that all Reeb chords   with image  in $\Sigma_{\tau} \cap U_1$ and with lengths in $U_2$ for $\tau \in (0,\delta)$ are non-degenerate, where  $U = U_1 \times U_2 \subset T^*N \times \R$ is an isolating neighborhood of   $\{ (x_{c_{\tau}}, 2\eta_{\tau}), (x_{c_{\tau}'}, 2\eta'_{\tau}) \}_{\tau \in (-\epsilon, 0]}$,
and that $\Sigma_{\tau}$ is compact for all $\tau \in (0, \delta)$, 
 then there exists $\delta_0 \in (0, \delta]$ such that there exists     a smooth one-parameter family $\{ (x_{d_{\tau}}, 2\sigma_{\tau})\}_{\tau \in (0, \delta_0)}$  of $\rho$-symmetric periodic orbits 
having the properties stated above.
 \end{theorem}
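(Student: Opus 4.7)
The plan is to mirror the argument used to prove Theorem~\ref{thm: intromainthm}, the key ingredient being the invariance of local equivariant Lagrangian Rabinowitz Floer homology under $\rho$-equivariant Hamiltonian perturbations. I would fix the uniformly isolating neighborhood $U=U_1\times U_2\subset T^*N\times\R$ provided by the hypothesis and write $HF^{\mathrm{loc}}_*(F_\tau,U)$ for the associated local eLRFH. Continuation maps in local eLRFH give canonical isomorphisms between $HF^{\mathrm{loc}}_*(F_\tau,U)$ for different $\tau$ as long as no Reeb chord pair crosses $\partial U$; the uniform isolation hypothesis on $(-\epsilon,0]$ and the continuity of $(c_\tau,\eta_\tau)$ through $\tau=0$ together guarantee this on both sides of $0$ for small $\tau$.

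\textbf{Rank computation.}
The heart of the argument is to establish $\dim HF^{\mathrm{loc}}_*(F_\tau,U)=2$ for $\tau<0$ close to $0$. For such $\tau$ the chain complex has exactly two generators, namely the non-degenerate $\rho$-symmetric Reeb chord pairs associated with $(c_\tau,\eta_\tau)$ and $(c_\tau',\eta_\tau')$, and the differential between them must be shown to vanish. I would exploit the fact that both families converge to the same degenerate Reeb chord $(c_0,\eta_0)$ as $\tau\to 0^-$, so that the action difference, and hence the Hofer energy of any hypothetical connecting Floer trajectory, tends to zero. A Gromov-compactness argument inside $U$ then forces such trajectories to degenerate to constants, which is impossible for a non-trivial differential. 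Thus the chain complex rank equals the homology rank, which by invariance of local eLRFH remains $2$ for $\tau\in(0,\delta)$ with $\delta$ small enough.

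\textbf{Producing the new orbit, simplicity, and smooth family.}
For $\tau\in(0,\delta)$ the pair $\{(c_\tau,\eta_\tau),(\bar c_\tau,\eta_\tau)\}$ contributes one generator to the chain complex, so matching the rank~$2$ conclusion of the previous step forces at least one additional $\rho$-symmetric Reeb chord pair $\{(d_\tau,\sigma_\tau),(\bar d_\tau,\sigma_\tau)\}$ with image in $\Sigma_\tau\cap U_1$ and length in $U_2$; uniform isolation forbids escape and compels convergence to $(c_0,\eta_0)$ as $\tau\to 0^+$, yielding assertion~(1). The simplicity claim~(2) is obtained verbatim from the corresponding argument in Theorem~\ref{thm: intromainthm}(2), by combining $\sigma_\tau\to n\ell_0$ with the non-degeneracy of each proper cover $(\mathfrak{c}_\tau^m,m\ell_\tau)$ for $m<n$, which excludes $(d_\tau,\sigma_\tau)$ from being a proper cover. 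Under the additional non-degeneracy and compactness hypotheses, the implicit function theorem applied to the $\rho$-symmetric Reeb chord equation produces smooth local branches through each $(d_\tau,\sigma_\tau)$, and uniqueness together with compactness of $\Sigma_\tau$ patches them into a single smooth one-parameter family on $(0,\delta_0)$.

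\textbf{Main obstacle.}
The principal difficulty lies in the rank lower bound above: \emph{a priori} a non-trivial Floer differential between the two $\tau<0$ branches could collapse $HF^{\mathrm{loc}}_*(F_\tau,U)$ to rank zero and destroy the bifurcation obstruction. The hard part is therefore to carry out the Hofer-energy and Gromov-compactness argument rigorously in the local equivariant Lagrangian Rabinowitz setting, in particular verifying that no $\rho$-invariant broken configurations or sphere bubbles survive the vanishing-energy limit as $\tau\to 0^-$. Once this lower bound is established, the remainder of the proof is a direct adaptation of the arguments used for Theorem~\ref{thm: intromainthm}.
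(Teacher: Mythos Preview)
Your overall strategy—invariance of local eLRFH across $\tau=0$ combined with a rank comparison—is exactly the paper's. However, the step you flag as the ``main obstacle'' is in fact a non-issue, and the paper's proof bypasses it entirely.

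You try to establish the \emph{lower} bound $\dim HF^{\mathrm{loc}}_*(F_\tau,U)=2$ for $\tau<0$ by ruling out a nontrivial differential between the two generators via a vanishing-energy/Gromov-compactness argument. This is unnecessary: with two non-degenerate $\Z_2$-pairs as the only generators, the total rank of the homology over $\Z_2$ is either $0$ (if there is a nontrivial differential between them) or $2$ (if not). In either case it is \emph{not} $1$. On the other side, if $(c_\tau,\eta_\tau)$ were the only chord pair in $U$ for small $\tau>0$, Proposition~\ref{prop: eRFHcomputation} would give rank exactly $1$. Invariance (Proposition~\ref{prop: invariance}) then yields the contradiction without ever deciding whether the $\tau<0$ rank is $0$ or $2$. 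This is precisely the paper's argument.

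So your Hofer-energy step—whose rigorous execution you correctly identify as delicate (the usual energy lower bounds for non-constant strips are not uniform when the asymptotics themselves collapse as $\tau\to 0^-$)—can simply be deleted. The rest of your outline (convergence to $(c_0,\eta_0)$ by shrinking $U$, simplicity via the minimality of $n$, and the smooth family via the implicit function theorem plus the $\omega$-limit argument) matches the paper's treatment of the analogous assertions in Theorem~\ref{thm: intromainthm}.
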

 \begin{figure}[h]
\begin{center}
\begin{tikzpicture}[scale=0.9]
\draw[thick] (-3,0.4) to [out=20, in=180] (-2,0.5);
\draw[thick] (-2,0.5) to [out=0, in=180] (1.4,0.2);
\draw[thick] (1.4,0.2) to [out=0, in=200] (3,0.4);
  \draw [fill] (0.5,0.24) circle [radius=0.08];
\draw[thick] (0.5, 0.24) to [out=70, in=185] (2.5,1);
\node at (-1.6,  0 ) {$ \small{\{(x_{c_\tau},2\eta_\tau)  \}}$};
\node at (2.2, 1.4) {$ \small{\{(x_{d_\tau},2\sigma_\tau) \}}$};
\node at (-2.2, 1.7) {$ \small{\{(x_{c'_\tau},2\eta'_\tau) \}}$};
\draw[thick] (0.5,0.24) to [out=120, in=0] (-3,1.2);

\end{tikzpicture}
\end{center}
\label{fig:theorem3}
\end{figure}

In the last scenario   we consider a family of symmetric periodic orbits that ends in a degenerate orbit.
\begin{theorem}\label{thm: intromainthm1} 
Let $\{ (x_{c_{\tau}}, 2\eta_{\tau}) \}_{\tau \in (-\epsilon, 0]}$ be a smooth family of $\rho$-symmetric periodic orbits satisfying    the following conditions.
\begin{itemize}
\item The  Reeb chord $(c_{\tau}, \eta_{\tau})$   is non-degenerate for every $\tau \in (-\epsilon, 0)$ and    $  (c_0, \eta_0)$ is degenerate.  
\item The    Reeb chord $(x_{c_0}, 2\eta_0)$    is isolated in the set of all  Reeb chords on $\Sigma_0$.
\item There exists an open neighborhood $V=V_1 \times V_2$ of $\im (x_{c_0}) \times \{2 \eta_0 \}$ in $T^*N \times \R$ such that for every $\tau>0$ sufficiently small, no symmetric periodic orbit  on $\Sigma_{\tau}$  has its traces in $V$.
\end{itemize}
Then there exists $\delta \in (0, \epsilon)$ such that  for each $\tau \in (-\delta,0) $, we find another $\rho$-symmetric periodic orbit $(x_{d_{\tau}}, 2\sigma_{\tau})$ such that $\im (x_{d_{\tau}}) \subset \Sigma_{\tau}$.
It tends to $ (x_{c_0},2\eta_0)  $ as $ \tau \to 0^-$.
Moreover, if  we assume that all Reeb chords   with image in $\Sigma_{\tau} \cap V_1$ and with lengths in $V_2$ for $\tau \in (-\delta, 0)$ are non-degenerate, and that $\Sigma_{\tau}$ is compact for all $\tau \in (-\delta, 0]$, 
 then there exists $\delta_0 \in (0, \delta]$ such that there exists    a smooth one-parameter family $\{ (x_{d_{\tau}}, 2\sigma_{\tau})\}_{\tau \in (- \delta_0,0)}$ such that $ (x_{d_{\tau}}, 2\sigma_{\tau})$ tends to $(x_{c_0}, 2\eta_0)$ as $\tau \to 0^-$.
 
\end{theorem}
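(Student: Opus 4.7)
My approach is to exploit the invariance of local equivariant Lagrangian Rabinowitz Floer homology (local eLRFH) under Hamiltonian perturbations, applied to the isolated degenerate Reeb chord pair $\{(c_0,\eta_0),(\overline{c}_0,\eta_0)\}$ inside $V$. Since $(x_{c_0},2\eta_0)$ is isolated on $\Sigma_0$, after possibly shrinking $V=V_1\times V_2$ I may assume that this pair is the only Reeb chord pair on $\Sigma_0$ whose image lies in $V_1$ and whose length lies in $V_2$, so that $\mathrm{eLRFH}^{\mathrm{loc}}(c_0,\eta_0)$ is well-defined through a small non-degenerate perturbation.

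I would then use the hypothesis on both sides of $\tau=0$. On the side $\tau>0$, the third assumption forces the local Floer chain complex at parameter $\tau$ to be empty, and the invariance of local eLRFH under the homotopy from $F_0$ to $F_\tau$ yields
$$\mathrm{eLRFH}^{\mathrm{loc}}(c_0,\eta_0)=0.$$
On the side $\tau<0$, the chord pair $\{(c_\tau,\eta_\tau),(\overline{c}_\tau,\eta_\tau)\}$ is non-degenerate and still lies in $V$, contributing a single generator to the local Floer complex in the Robbin-Salamon degree $\mu(c_\tau,\eta_\tau)$. If this were the only pair in $V$, the local eLRFH for $\tau<0$ would be one-dimensional, contradicting the vanishing just obtained by invariance. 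Hence there must exist at least one additional non-degenerate pair $\{(d_\tau,\sigma_\tau),(\overline{d}_\tau,\sigma_\tau)\}$ in $V$, giving rise to a geometrically distinct $\rho$-symmetric periodic orbit $(x_{d_\tau},2\sigma_\tau)$ on $\Sigma_\tau$. Any subsequential limit of such orbits as $\tau\to 0^-$ is a symmetric periodic orbit on $\Sigma_0$ with traces in $\overline{V}$, hence equals $(x_{c_0},2\eta_0)$ by isolation.

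For the \emph{moreover} part, I assume all Reeb chords in $V$ on $\Sigma_\tau$, $\tau\in(-\delta,0)$, are non-degenerate and each $\Sigma_\tau$ is compact. The implicit function theorem applied to the parametric Reeb chord equation with boundary condition on $N^*Q$ produces, for each fixed $\tau_*\in(-\delta,0)$, a smooth local continuation of the chord $(d_{\tau_*},\sigma_{\tau_*})$. Extending it maximally, compactness of $\Sigma_\tau$ together with the trapping of lengths in $V_2$ prevents blow-up, while non-degeneracy forbids collision with the original family $(c_\tau,\eta_\tau)$. The continuation therefore extends over all of $(-\delta_0,0)$ for some $\delta_0\in(0,\delta]$, and by the isolation argument above it tends to $(x_{c_0},2\eta_0)$ as $\tau\to 0^-$.

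The main obstacle is to make the invariance argument rigorous across the degeneration at $\tau=0$: one must verify that the continuation map for local eLRFH is well-defined there, which requires uniform action and energy bounds within $V$ as well as a careful treatment of the $\mathbb{Z}_2$-equivariance coming from the involution on chord pairs. A related delicate point at the chain level is the claim that a single non-degenerate pair $\{(c_\tau,\eta_\tau),(\overline{c}_\tau,\eta_\tau)\}$ genuinely contributes one generator to the equivariant local complex (rather than two that cancel); this is where the equivariant structure plays its crucial role and prevents a naive Morse-theoretic cancellation that would trivialize the argument.
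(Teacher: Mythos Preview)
Your proposal is correct and follows essentially the same route as the paper: use the third hypothesis to see that the local eLRFH computed from the $\tau>0$ side vanishes, then invoke invariance (Proposition~\ref{prop: invariance}) together with the one-generator computation for an isolated non-degenerate $\Z_2$-pair (Proposition~\ref{prop: eRFHcomputation}) to force an additional chord pair for $\tau<0$. The two points you flag as obstacles are exactly what Propositions~\ref{prop: invariance} and~\ref{prop: eRFHcomputation} establish, so they are not gaps but rather the ingredients the paper has already supplied.
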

 
 \begin{figure}[h]
\begin{center}
\begin{tikzpicture}[scale=0.9]

  \draw[thick] (-4.9, 1) to [out=0, in= 190](-1, 0.5);
\draw[thick]  ( -1, 0.5) to [in=90, out= 0](0, 0);
 
\draw[thick] (0,0) to [out=270, in=0] (-4, -0.5);
\draw[thick] (-4,-0.5) to [out=180, in=0] (-4.9, -0.5);
  \draw[fill] (0,0) circle [radius=0.05];
    \draw[fill, white] (4,0) circle [radius=0.05];

\node at (1,0.7) {$\{(x_{c_{\tau}}, 2\eta_{\tau}) \}$};
\node at (1,-0.85) {$\{(x_{d_{\tau}},  2 \sigma_{\tau} )\}$};

\end{tikzpicture}
\end{center}
\label{fig:theorem2}
\end{figure}

 \begin{remark}
 Theorem \ref{thm: intromainthm1}  holds in a more general situation: If   $\{ ({c_{\tau}},  \eta_{\tau}) \}_{\tau \in (-\epsilon, 0)}$ is   a family of non-degenerate Reeb chords such that the family does not extend over $0$ and  such that the $\omega$-limit set $\Omega \subset \Sigma_0$ is isolated in the set of Reeb chords on $\Sigma_0$, then we have the same conclusion as the theorem, see \cite[Theorem B]{BFvK19}.
 Note that Belbruno, Frauenfelder, and van Koert do not use the local eLFRH, but use an elementary method based on Floer's stretching method for time-dependent gradient flow lines, which does not require gluing.
 \end{remark}

\subsection{Local eLRFH} In Section \ref{sec:eLRH}, we construct  equivariant Lagrangian Rabinowitz Floer homology (eLRFH), based on ideas in \cite{AM18,  CF09,FS16RFH,Me14}. Basically, it is a Floer homology for the $\Z_2$-equivariant Rabinowitz action functional associated to a $\rho$-invariant defining Hamiltonian $F$ for a contact hypersurface $\Sigma$.  
\noindent
The critical points of the action functional correspond to $\rho$-symmetric periodic orbits in the contact hypersurface $\Sigma = F^{-1}(0)$, more precisely, $\Z_2$-pairs of Reeb chords $\{(c, \eta), (\overline c, \eta)\}$ where $\eta$ is the length of $c$. 

For an isolated set $\mathcal{F}$ of critical points  of the equivariant Rabinowitz action functional with  common action value, we can define its local equivariant Lagrangian Rabinowitz Floer homology (local eLRFH). It is defined along a standard scheme of Floer theory, see for instance  \cite{CFHW, GinzGurel,  Mclean}, by taking an isolating neighborhood $U$ of $\mathcal{F}$ in the sense of Section \ref{sec: localnonequi}.  Roughly speaking, we perform the same construction as for eLRFH, but only consider   generators and differentials  in $U$. A standard limiting argument in local Floer theory  stated in Lemma \ref{lem: localconveq}  is crucial.   We give a detailed description in Section \ref{sec:local}. 

In particular, if there is a degenerate chord in an isolated set of Reeb chords, then we need to perturb the Hamiltonian $F$ slightly in a neighborhood $U$ so that all Reeb chords in $U$ are non-degenerate. Proposition \ref{prop: invariance} asserts that the local eLRFH is independent of the choice of perturbations of the Hamiltonian. This invariance is the main ingredient of the proof of the  theorems.

\subsection{Outline of the proofs of the theorems} Let us outline the proof of Theorem \ref{thm: intromainthm} using the local LRFH. Consider a family of Reeb chords $\{ (c_{\tau}, \eta_{\tau})\}_{{\tau} \in (-\epsilon,\epsilon)}$ satisfying the hypothesis of Theorem \ref{thm: intromainthm}. The set $\mathcal{F} = \{(c_0, \eta_0), (\overline c_0, \eta_0)\}$ then forms an isolated set of Reeb chords. Since $(c_{\tau}, \eta_{\tau})$ is non-degenerate for $\tau \neq 0$, we can regard the Hamiltonian $F_{\delta}$ for $\delta < 0$ with $\lvert \delta \rvert$ small enough as a perturbation of $F_0$ to define $\FH_*^{\rm loc, \Z_2}(\widetilde{U};F_\delta)$ for $\mathcal{F}$ where $U$ is a uniformly isolating neighborhood for the sub-family $\{(c_\tau,\eta_\tau)\}_{\tau \in (-\epsilon, 0]}$ and $\widetilde{U}=U\times S^N$. 

In particular, $\FH_*^{\rm loc, \Z_2}(\widetilde{U}, F_{-\delta})$ for $\{(c_{-\delta}, \eta_{-\delta}), (\overline c_{-\delta}, \eta_{-\delta})\}$ with $\delta > 0$  sufficiently small is nontrivial only at the degree $* = \mu(c_{-\delta}, \eta_{-\delta})$ by Proposition \ref{prop: eRFHcomputation} (applied with the uniformly isolated condition on $\{(c_\tau,\eta_\tau)\}_{\tau \in (-\epsilon, 0]}$), that is,
\[
\FH_*^{\rm loc, \Z_2}(\widetilde{U}; F_{-\delta}) = \begin{cases} \Z_2 & \text{ if }* = \mu(c_{-\delta},\eta_{-\delta})  , \\ 0 & \text{ otherwise}.\end{cases}
\]
The invariance of $\FH_*^{\rm loc, \Z_2}(\widetilde{U}; F_\tau)$ (for small $|\tau|\ll 1 $) under Hamiltonian perturbations in Proposition \ref{prop: invariance} then implies that for each sufficiently small $\delta > 0$, the $\Z_2$-pair $\{(c_{\delta}, \eta_{\delta}), (\overline c_{\delta}, \eta_{\delta})\}$, and hence the corresponding symmetric periodic orbit, cannot be isolated inside $U$ since $\mu(c_{\delta}, \eta_{\delta}) \neq \mu(c_{-\delta}, \eta_{-\delta})$; otherwise, we could apply Proposition \ref{prop: eRFHcomputation} to $\FH_*^{\rm loc, \Z_2}(\widetilde{U}; F_{\delta})$ which results in a contradiction to the invariance. This means we must have another Reeb chord $(c'_{\delta}, \eta'_{\delta})$ in $\Sigma_{\delta}$   as asserted. 
The other two theorems are also proved   using the invariance of the local eLRFH. Detailed proofs are given in Section \ref{sec:proofs}.

\subsection{Example: the rotating Kepler problem} We provide an explicit example of a reversible Hamiltonian system, the rotating Kepler problem,  in which the first bifurcation scenario  described in    Theorem \ref{thm: intromainthm} occurs.
That such bifurcations occur in the rotating Kepler problem was already known before by computing all periodic orbits directly using elementary method,  see for instance   \cite{RKPCZ, book, KKJplus}.
 In this section, relying on  a variety of well-known facts on the dynamics, we  illustrate how our theorem applies to a concrete situation.

  The describing Hamiltonian $F \colon T^*(\R^2 \setminus \{ (0,0) \} )\to \R$  is given by 
 $$
F(q,p) = \frac{1}{2}|p|^2 - \frac{1}{|q|} + q_1 p_2 - q_2 p_1,
$$
which    has a unique critical value $ \tau= -3/2$. 
The symplectic form is   the standard one $\omega_0 = d p_1 \wedge dq_1 + dp_2 \wedge dq_2$ and the Liouville one-form is $\lambda_0 = p_1 dq_1 + p_2 dq_2$.
For $\tau <-3/2$, the regular energy level $F^{-1}(\tau)$ consists of a bounded component $\Sigma_{\tau}$ and an unbounded component. 
The bounded component $\Sigma_{\tau}$ is fiberwise starshaped  \cite{AFvKP12,  CFvK14Finsler} (actually, a stronger result holds: it is fiberwise convex).
The contact form on $\Sigma_{\tau}$ is the restriction of  $\lambda_0$.
The projection $\mathcal{K}_{\tau}$ of  $\Sigma_{\tau}$ under the footpoint projection $\pi\colon T^*( \R^2 \setminus \{ (0,0) \} ) \to \R^2 \setminus \{ ( 0,0) \}$ is the closed unit ball of radius $r=r(\tau)$ minus the origin,
where the radius $r \colon (- \infty, -3/2) \to (0,1)$ is  strictly increasing.

An interesting feature of this problem is that $F$ is invariant under the family of exact antisymplectic involutions
$$
\rho_{\theta} (q_1, q_2, p_1, p_2 ) := \begin{pmatrix}    \cos 2 \theta & \sin 2\theta & 0 & 0 \\ \sin 2\theta & - \cos 2\theta & 0 & 0 \\ 0 & 0 & - \cos 2\theta & - \sin 2\theta \\ 0 & 0 & - \sin 2\theta & \cos 2\theta        \end{pmatrix} \begin{pmatrix} q_1 \\ q_2 \\ p_1 \\ p_2 \end{pmatrix}
 $$
for   $\theta \in [0,  \pi]$. 
For the sake of simplicity,    we only consider 
\[
\rho_0 (q_1, q_2, p_1, p_2) = (q_1, -q_2, -p_1, p_2).
\]
The associated Lagrangian  is given by $L   = \mathrm{Fix}(\rho_0) = \{ q_2 = p_1 =0\}$, and    
the projection  $\ell   = \pi (L )$ is   the $q_1$-axis. 
 For each $\tau < -3/2$, the Lagrangian $L$ intersects transversally the energy level $\Sigma_{\tau}$ and the intersection  is non-empty.
 We set $\mathcal{L}_{  \tau} := L  \cap \Sigma_{\tau}$.

  For each $\tau < -3/2$, there exists a   circular orbit on $\Sigma_{\tau}$,  called the \textit{direct circular orbit}, 
 which   rotates in the same direction as the coordinate system.
 Obviously it is invariant under $\rho_0$.
Let $\{( x_{\tau}, 2\eta_{\tau})\}_{  {\tau} \in (-\infty, -3/2)}$ be the family of the simply covered direct circular orbits. We choose a parametrization of each $x_\tau$ such that $x_{\tau}(0)\in \mathcal{L}_\tau$ and the family $\{x_\tau(0)\}_{\tau\in (-\infty, -3/2)}$ is smooth.  
The $n$-fold cover of $(x_{\tau}, 2 \eta_{\tau})$ will be denoted by $(x_{\tau}^n, 2n\eta_{\tau})$.
Denote by $(c_{\tau}:=x_{\tau}|_{[0,\eta_{\tau}]}, \eta_{\tau})$ the Reeb chord with endpoints in $\mathcal{L}_{ \tau}$
and by $(c_{\tau}^n, n \eta_{\tau})$ the $n$-fold cover of $(c_{\tau}, \eta_{\tau})$ which is the (half) Reeb chord    associated to $(x_{\tau}^n, 2n\eta_{\tau})$.

Fix $N \geq 1$ and consider the family $\{ (c_{\tau}^N , N\eta_{\tau})\}_{\tau \in (-\infty, -3/2)}$ of $N$-covered Reeb chords. 
For any relatively prime integers $k,l \in \N$ with $k>l$ such that $k-l =N$, 
there exists a unique energy level 
\begin{equation}\label{eq:energy}
\tau_{k,l} := -\frac{1}{2} \bigg( \frac{k}{l} \bigg)^{\tfrac{2}{3}} - \bigg( \frac{l}{k} \bigg)^{\tfrac{1}{3}} \in \left(-\infty, -\frac{3}{2} \right) 
\end{equation}
such that  the   Reeb chord $(c_{{\tau}_{k,l} }^N, N\eta_{\tau_{k,l} })$ becomes degenerate and passing through it the Robbin-Salamon index jumps by one, see  \cite[Appendix B]{RKPCZ} and \cite[Section 7.1]{JLee}
(recall from Remark \ref{rmk:index} (3) that since we now have $n=2$, the index can only jump by one). 
 Since $\tau_{k,l}=\tau_{k, k-N}$ is strictly increasing and converges   to $-3/2$ as $k \to \infty$, the energy levels at which  $(c_{\tau}^N,N \eta_{\tau} )$ gives birth to   new Reeb chords are discrete.
Hence we  find $\epsilon = \epsilon(k,l)>0$ sufficiently small such that  the other Reeb chords $(c_{\tau}^N, N\eta_{\tau})$ are non-degenerate for $\tau \in ({\tau}_{k,l}  - \epsilon, {\tau}_{k,l} +\epsilon) \setminus \{ {\tau}_{k,l}  \}$. 
The subfamily $\{ (x_{\tau}^N, 2N\eta_{\tau})\}_{\tau \in ({\tau}_{k,l}  - \epsilon,{\tau}_{k,l}  ]}$ is uniformly isolated, see    \cite[Section 7]{RKPCZ} or \cite[Section 3.4]{KKJplus}.
By the assertion (1) of Theorem \ref{thm: intromainthm}, we find  additional $\rho_0$-symmetric (simply covered) periodic orbits $  y^{k,l}_{\tau}  $ for all $\tau>0$ small enough,   bifurcating from  $x_{{\tau}_{k,l} }^{N}$.
In fact, they are non-degenerate and  form a one-parameter family of $\rho_0$-symmetric periodic orbits, see  \cite[Section 8.4]{book} and  \cite{RKPCZ, KKJplus}.
 See also Figure \ref{Fig:rkp}.
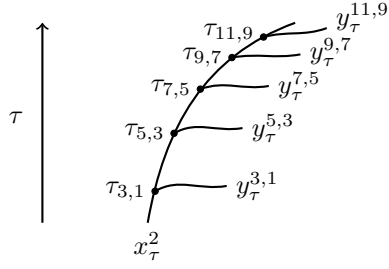
\begin{figure}[h]
\begin{tikzpicture}[scale=0.7]
\draw[thick] (-2,-2) [in=200, out=80] to (0.8,1.8);
\node at (-2, -2)[below] {$x_{\tau}^2$};
  \draw [fill] (-1.86,-1.4) circle [radius=0.06];
\draw[thick] (-1.86, -1.4) [out=30, in=190] to (-0.5, -1.3);
\node at (-0.5, -1.3)[right] {$y_{\tau}^{3,1}$};
\node at (-1.86, -1.4)[left] {$\tau_{3,1}$};

  \draw [fill] (-1.5 ,-0.3 ) circle [radius=0.06];
\draw[thick] (-1.5, -0.3) [out=30, in=190] to (-0.2, -0.2);
\node at (-0.2, -0.2)[right] {$y_{\tau}^{5,3}$};
\node at (-1.5, -0.3)[left] {$\tau_{5,3}$};

  \draw [fill] (-1  , 0.54  ) circle [radius=0.06];
\draw[thick] (-1 ,  0.54) [out=20, in=190] to ( 0.3,  0.6);
\node at (0.3, 0.6)[right] {$y_{\tau}^{7,5}$};
\node at (-1 , 0.54)[left] {$\tau_{7,5}$};

  \draw [fill] (-0.4, 1.14) circle [radius=0.06];
\draw[thick] (-0.4 ,  1.14) [out=10, in=190] to ( 0.9,  1.2);
\node at ( 0.9,  1.25)[right] {$y_{\tau}^{9,7}$};
\node at (-0.4, 1.14)[left] {$\tau_{9,7}$};

  \draw [fill] ( 0.2 , 1.53 ) circle [radius=0.06];
\draw[thick] (0.2 ,  1.53) [out=15, in=200] to ( 1.4,  1.7);
\node at (1.4,  1.9)[right] {$y_{\tau}^{11,9}$};
\node at (0.2,  1.6)[left] {$\tau_{11,9}$};

\draw[thick, ->] (-4,-2) to (-4,1.8);
\node at (-4.5, 0) {$\tau$};
\end{tikzpicture}
  \caption{An illustration of the bifurcations of $\rho_0$-symmetric  simple periodic orbits  from the doubly covered direct circular orbits.}
  \label{Fig:rkp}
\end{figure}
Therefore, we find another family of $\rho_0$-symmetric periodic orbits $\{ \rho_{\pi/2} \circ y_{\tau} ^{k,l} \}$ bifurcating from  $x_{{\tau}_{k,l} }^{N}$,  since $\rho_0$ and $\rho_{\pi/2}$ commute.
 Indeed, it is easy to check that  $\rho_{\theta_1} \circ \rho_{\theta_2} = \rho_{\theta_2 } \circ \rho_{\theta_1}$ if and only if $\theta_1 \equiv \theta_2 + \pi/2$  mod $\pi$.  
 Note that $\rho_{\pi/2} (q_1, q_2, p_1, p_2) = (-q_1, q_2, p_1, -p_2)$ and hence the projected images of $\{y_{\tau}^{k,l} \}$ and $\{ \rho_{\pi/2} \circ y_{\tau}^{k,l} \}$ to the $q$-plane are mapped to each other  by the reflection with respect to the $q_2$-axis. 
 See Figure   \ref{rkp2}.
   \begin{figure}[h]
    \includegraphics[width=0.7\linewidth]{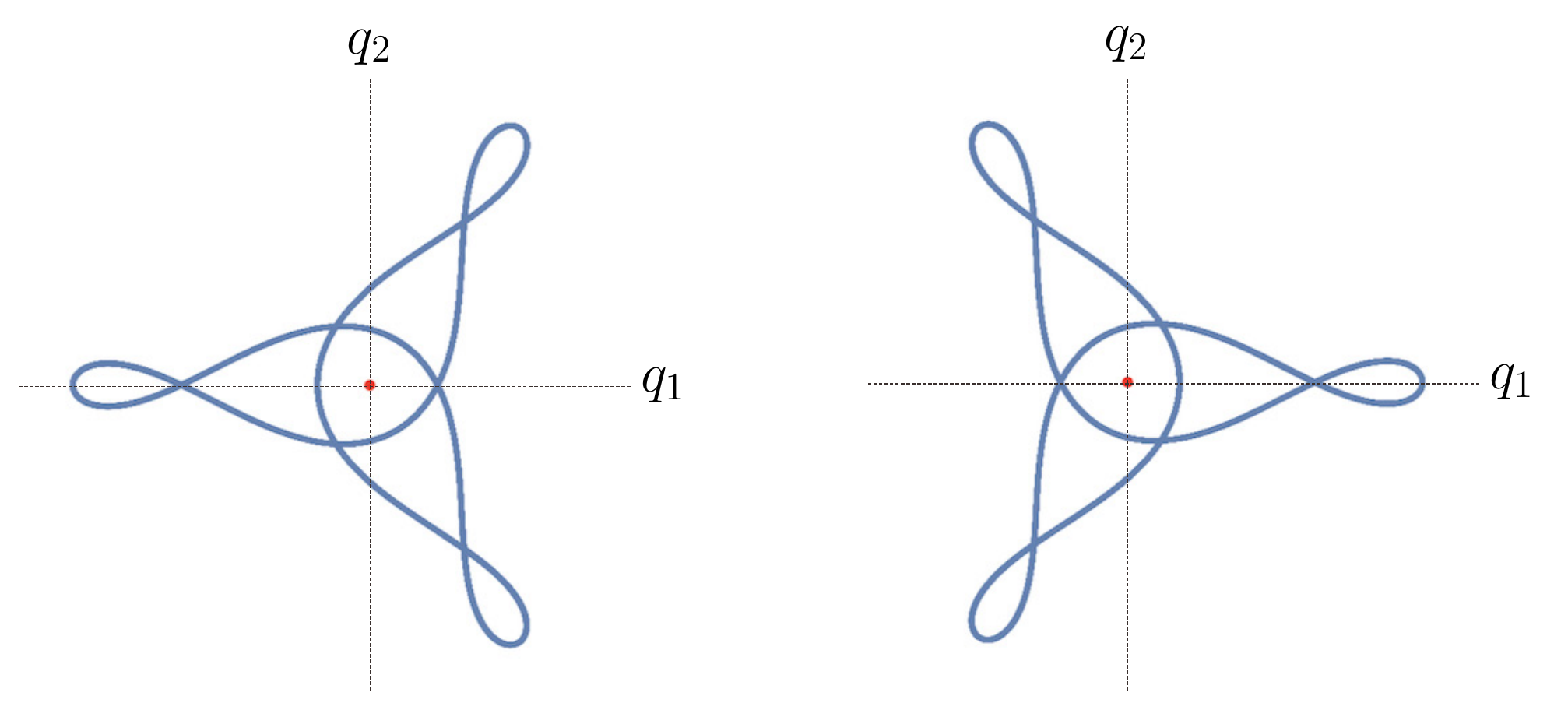}
\caption{The case $(k,l)=(3,1)$: The  images of two families of  $\rho_0$-symmetric  simple periodic orbits  under the footpoint projection $T^* \left( \R^2 \setminus \{ (0,0) \} \right) \to \R^2 \setminus \{ (0,0) \}$. They bifurcate from the degenerate doubly covered direct circular orbit and   are mapped to each other   by the reflection with respect to the vertical axis.}
\label{rkp2}
\end{figure}

 \begin{remark}
 The preceding discussion holds for every pair $\theta_1 \neq \theta_2 \in [0, \pi]$ such that $\theta_1 \equiv \theta_2 + \pi/2$ mod $\pi$.
 In this way, for each coprime $k>l$ with $N=k-l$, one finds an $S^1$-family of  symmetric periodic orbits, with prescribed initial conditions, bifurcating from the $N$-fold covered direct circular orbits $(x_{\tau}^N, 2N\eta_{\tau})$ at $\tau = \tau_{k,l}$.
Forgetting the symmetries, this provides a $T^2$-family of (parametrized) periodic orbits born out of   $(x_{\tau}^N, 2N\eta_{\tau})$ at $\tau = \tau_{k,l}$, which is  referred to as the $T_{k,l}$-family, see   \cite{RKPCZ,  KKJplus}.

 \end{remark}

 \begin{remark}\label{rmk:RKPdouble} 
As mentioned in Remark \ref{rmk:introRKP}, we now observe bifurcations of doubly symmetric periodic orbits in the rotating Kepler problem.
 Consider the family $\{(x_{\tau}, 2\eta_{\tau})\}_{\tau \in (-\infty, -3/2)}$ of the direct circular orbits as before,
 which are invariant under $\rho_0$ and $\rho_{\pi/2}$.
 We choose a parametrization of each $x_\tau$ such that $x_{\tau}(0)\in \mathcal{L}_{\tau,0}$ and $x_{\tau}(\eta_{\tau}/2) \in \mathcal{L}_{\tau, \pi/2}$,  where $\mathcal{L}_{\tau, \theta} = \mathrm{Fix}(\rho_{\theta}) \cap \Sigma_{\tau}$ for $\theta =0, \pi/2$.
Denote by   $\{(w_{\tau}:=x_{\tau} |_{[0, \eta_{\tau}/2]}, \eta_{\tau}/2)\}_{\tau \in (-\infty, -3/2)}$ the family of associated (quarter) Reeb chords.
 Fix $N \geq 1$ and consider the family $\{ (w_{\tau}^N , N\eta_{\tau} /2)\}_{\tau \in (-\infty, -3/2)}$. 
 Choose any relatively prime integers $k,l \in \N$ with $k>l$ such that $k-l =N$.
  It was shown in  \cite[Section 7.1]{JLee} that the  Robbin-Salamon index of $(w_{\tau}^N, N\eta_{\tau}/2)$ jumps by one if and only if
  \begin{equation}\label{eq:doubly}
  \tau = \tau_{k,l} \quad \text{ and } \quad \frac{k \left( 2(k-l)-1 \right)}{2(k-l)} \in \Z,
  \end{equation}
  where $\tau_{k,l}$ is defined as  \eqref{eq:energy}. 
  If these conditions are fulfilled, then a new family of doubly symmetric periodic orbits is born.
  For example, if $(k,l)=(3,1)$, then  bifurcations of symmetric periodic orbits occur at $\tau = \tau_{3,1}$ as above, but not bifurcations of doubly symmetric periodic orbits, since the last condition of    \eqref{eq:doubly} is not satisfied.
In the case that $(k,l)=(2,1)$, a new family $\{y_{\tau} \}$ of doubly symmetric periodic orbits is born out of the simple covered direct circular orbit  $\{(x_{\tau}, 2\eta_{\tau})\}$  at  $\tau = \tau_{2,1}$.
On the other hand,  if we consider the family   $\{(v_{\tau} := x_{\tau} |_{[\eta_{\tau}/2, \eta_{\tau}]}, \eta_{\tau}/2)\}_{\tau \in (-\infty, -3/2)}$, then 
we find another family $\{z_{\tau} \}$ of doubly symmetric periodic orbits born out of   $\{(x_{\tau}, 2\eta_{\tau})\}$ at $\tau = \tau_{1,2}$.
Note that the projected image  of  the two families $\{y_{\tau}\}$ and $\{z_{\tau}\}$ on $\R^2 \setminus \{ (0,0) \}$ are related by the $\frac{\pi}{2}$-rotation,  see Figure \ref{rkp3}.
      \begin{figure}[h]
    \includegraphics[width=0.7\linewidth]{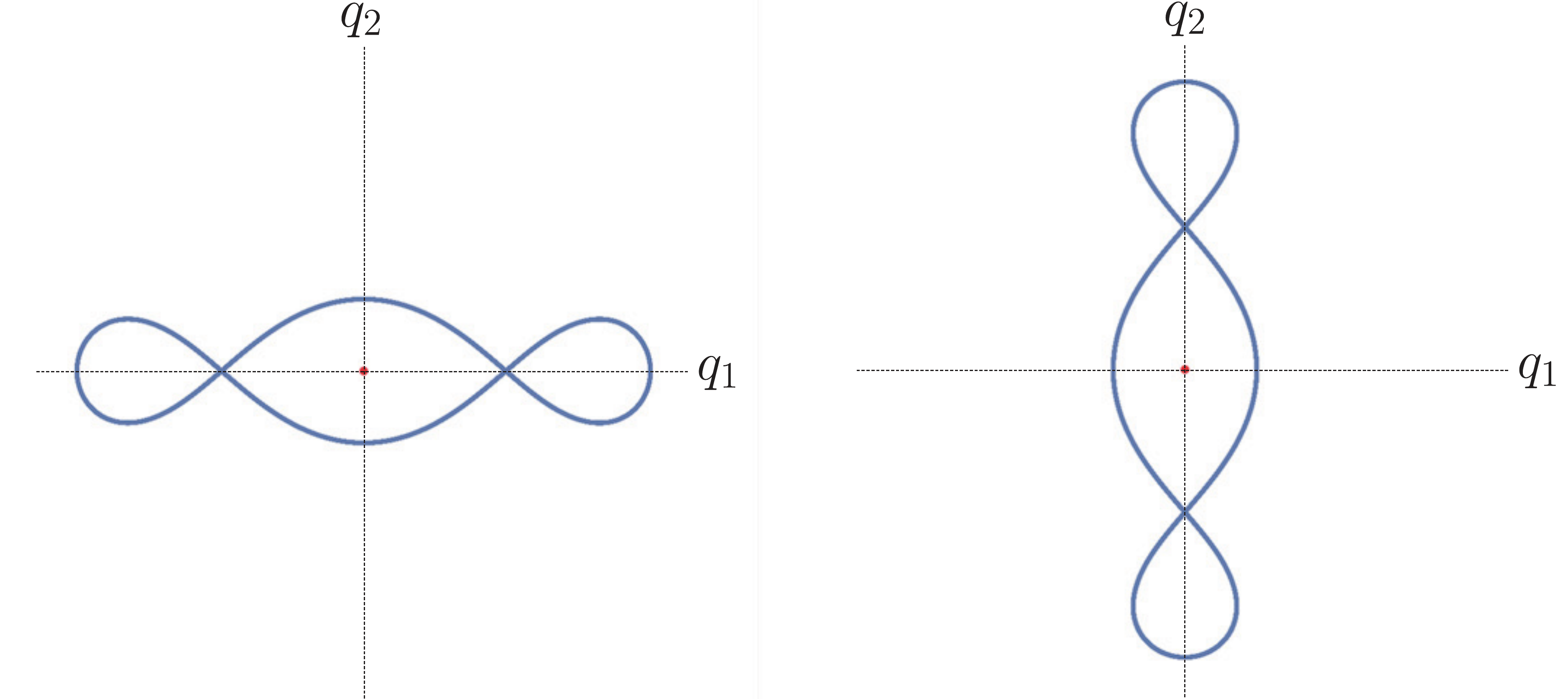}
\caption{The case $(k, l) = (2, 1)$: The  images of two families of   doubly symmetric periodic orbits  under the footpoint projection $T^* \left( \R^2 \setminus \{ (0,0) \} \right) \to \R^2 \setminus \{ (0,0) \}$.  They bifurcate from the degenerate simple covered direct circular orbit at $\tau = \tau_{2,1}$.}
\label{rkp3}
\end{figure}

 \end{remark}

 \bigskip

\noindent
\textbf{Acknowledgement.} The authors cordially thank Urs Frauenfelder for introducing to  them the paper of H\'enon and for helpful discussions on the invariance of the local equivariant Lagrangian Rabinowitz Floer homology and Felix Schlenk for reading a preliminary version very carefully.  Special thanks go to anonymous referee for valuable comments. JK is supported by Samsung Science and Technology Foundation under Project Number SSTF-BA1901-01. SK is supported by the grant 200021-181980/1 of the Swiss National Foundation. 
MK is supported  by SFB/TRR 191 ``Symplectic Structures in Geometry, Algebra and Dynamics" funded by the DFG.
\addtocontents{toc}{\protect\setcounter{tocdepth}{2}}

\section{Rabinowitz Floer theory}
\subsection{Equivariant Lagrangian Rabinowitz Floer homology}\label{sec:eLRH}
 Let $(W, \lambda)$ be the completion of a $2n$-dimensional Liouville domain $(W_0, \lambda_0)$.
 Abbreviate $\Sigma = \p W_0$.
 Suppose that $(W_0, \lambda_0)$ is equipped with an exact anti-symplectic involution $\rho_0$.
 The  hypersurface $\Sigma$ admits the contact form $\alpha := \lambda |_{\Sigma}$ and the anti-contact involution $\rho_{\Sigma}:= \rho_0|_{\Sigma}$, meaning that $\rho_{\Sigma}^*\alpha = -\alpha$.
 We extend $\rho_0$ to the exact anti-symplectic involution $\rho$ on $(W,\lambda)$ by
 \[
\rho(x) = \begin{cases} \rho_0(x)& x \in  W_0, \\ (r, \rho_{\Sigma}(y)) &  x = (r,y) \in [1, \infty) \times \Sigma . \end{cases}
\]
 Denote by $R = R_{\alpha}$ the associated Reeb vector field and by $\phi_R^t$ the Reeb flow. 
 We assume that the fixed point set $L = \mathrm{Fix}(\rho)$ of $\rho$ is non-empty and connected
 and that the intersection $\mathcal{L} = L \cap \Sigma$ is non-empty.
 It follows that $L$ is an exact Lagrangian submanifold of $(W, \lambda)$
 and that $\mathcal{L}$  is a Legrendrian submanifold of $(\Sigma, \alpha)$.

  A Reeb chord of length $\eta$ on $\Sigma $ is a pair $(c,\eta)$ such that $c\colon [0, \eta] \to \Sigma$ solves $\dot{c}(t) = R(c(t))$ for all $t\in[0,\eta]$ and   
  meets the boundary condition $c(0), c(\eta) \in \mathcal{L}$.
  It is called non-degenerate if it satisfies 
  \[
 D\phi_R^{\eta} ( T_{c(0)} \mathcal{L}) \cap T_{c(\eta)}\mathcal{L} = \{ 0 \}.
  \]
 In this section we assume that the contact form $\alpha$ is non-degenerate, that is, every Reeb chord is non-degenerate.

By \emph{a defining Hamiltonian for $\Sigma$} we mean a $\rho$-invariant Hamiltonian $F:W\to \R$ such that  $\Sigma=F^{-1}(0)$ is a regular energy level and $X_F|_\Sigma=R_\alpha$. Here the Hamiltonian vector field $X_F$ is defined by    $\ow(X_F,\cdot)=-dF$. Its flow is denoted by $\phi_F^t$.   Since $\Sigma$ splits $W$ into a bounded and an unbounded component, we can modify $F$ to be constant near infinity.

Let $\mathscr{P}=\{c\in C^\infty([0,1],W) \mid c(0),c(1)\in L\}$ denote the space of smooth paths in $W$ with ends in $L$. We fix a Morse function $h_N$ on $S^N$ which is invariant under the antipodal map $ z \mapsto -z$. Choose a defining Hamiltonian  $F\colon W\to \R$  for $\Sigma$ and define the (non-equivariant) action functional { $\widetilde{\mathcal{A}}^{F,N} \colon \mathscr{P} \times (0,\infty) \times S^N \to \R$} by  
$$
\widetilde{\mathcal{A}}^{F,N}(c,\eta,z):=\ell(c(0)) - \ell(c(1)) + \int_0^1c^*\lambda - \eta\int_0^1 F(c(t))dt+h_N(z),
$$
where $\ell \colon L \to \R$ is a smooth function   satisfying $\lambda|_L = d \ell$.
Such a function exists since $L$ is an exact Lagrangian.
A critical point $(c,\eta,z) \in \crit(\widetilde{\mathcal{A}}^{F,N})$  satisfies
$$
\dot{c}(t)=\eta X_F(c(t)) \;\; \text{and} \;\;  c(t)\in \Sigma,\; \forall t \in [0,1],\;\; \text{and} \;\; z\in \crit(h_N).
$$
Since we have assumed that $\alpha$ is non-degenerate, $\widetilde{\mathcal{A}}^{F,N}$ is Morse.{
\begin{remark}
In the sprit of the ordinary Rabinowitz action functional \cite{CF09}, one can consider $\widetilde{\mathcal{A}}^{F,N}\colon \mathscr{P}\times \R \times S^N\to \R$ by replacing   $(0,\infty)$ with $\R$. In this case,  the functional is Morse--Bott due to the presence of critical points with $\eta=0$, and hence one has to take up Morse--Bott approach to define the homology.
Since only non-constant Reeb chords are considered in this paper, we will stick to our functional.
\end{remark}
}The map $\rho$ induces the involution on { $\mathscr{P}\times  (0,\infty) \times S^N$}
$$
\mathcal{R}(c(t),\eta,z)=\big(\rho(c(1-t)),\eta,-z\big).
$$
Since this $\mathcal{R}$-action is free and $\widetilde{\mathcal{A}}^{F,N}$ is $\mathcal{R}$-invariant,
we    obtain   the \emph{equivariant Rabinowitz action functional} $\mathcal{A}^{F,N}:\Omega\to \R$ on the quotient space {$\Omega:=(\mathscr{P}\times (0,\infty)\times S^N)/\mathcal{R}$}, 
 given by $\mathcal{A}^{F,N}([c,\eta,z])=\widetilde{\mathcal{A}}^{F,N}(c,\eta,z)$.
This functional is Morse as well.

A  smooth family {${\bf J} = \{ J^{ \eta,z}_t \}_{(t, \eta, z) \in [0,1] \times (0,\infty) \times S^N}$} of $d\lambda$-compatible almost complex structures on $W$  
is called \emph{$\rho$-anti-invariant} if it satisfies
$$
J_t^{\eta, z}(x    )=-\rho _* \circ J_{1-t}^{\eta, -z}(\rho(x) ) \circ \rho_* 
$$
for all { $(t,\eta,z)\in [0,1]\times (0,\infty) \times S^N$} and $x\in W$.
We abbreviate by $\mathcal{J}_{\rho}^N$ the set of such smooth families of almost complex structures satisfying
\[
\sup_{t, \eta, z} \| J_t^{\eta, z} (\cdot) \|_{C^{\ell}} < \infty, \quad \ell \in \N.
\]
 Given ${\bf J} \in \mathcal{J}^N_{\rho}$, we obtain the $\mathcal{R}$-invariant $L^2$-metric on { $\mathscr{P}\times (0,\infty)  \times S^N$}, coupled with the standard inner product on { $(0,\infty)$} and the round metric $g_N$ on $S^N$,
\begin{equation}\label{eq: L2metric}
\langle (\hat{c}_1,\hat{\eta}_1,\hat{z}_1), (\hat{c}_2,\hat{\eta}_2,\hat{z}_2)\rangle = \int_0^1 d\lambda (\hat{c}_1,J_t(c,\eta)\hat{c}_2)dt + \hat{\eta}_1\cdot \hat{\eta}_2+g_N(\hat{z}_1,\hat{z}_2),	
\end{equation}
where { $(c,\eta,z)\in \mathscr{P}\times (0,\infty) \times S^N $ and $(\hat{c}_j, \hat{\eta}_j,\hat{z}_j)  \in T_{c}\mathscr{P} \oplus T_{\eta} (0,\infty)\oplus T_zS^N$} for $j=1,2$.
This induces an $L^2$-metric on $\Omega$.

Assume that the pair $(h_N,g_N)$ is Morse--Smale. The \emph{equivariant Rabinowitz Floer chain group} $\FC(F,h_N)$ is defined as the $\Z_2$-vector space consisting of formal sums  
$$
\zeta=\sum_{x\in \crit(\mathcal{A}^{F,N})} \zeta_x x,\quad \zeta_x\in \Z_2,
$$
meeting the following  finiteness condition: for every $\kappa\in \R$,
$$
\# \{x\in \crit(\mathcal{A}^{F,N})\mid \zeta_x\ne 0,\ \mathcal{A}^{F,N}(x)\ge \kappa \}<\infty.
$$
Fix $p_\pm=(c_\pm,\eta_\pm,z_\pm)\in \crit(\widetilde{\mathcal{A}}^{F,N})$. The space of (positive) gradient flow lines $\widehat{\mathcal{M}}(p_-,p_+) $ of $\widetilde{\mathcal{A}}^{F,N}$ from $p_-$ to $p_+$ with respect to the metric \eqref{eq: L2metric} consists of solutions {
 $$
 (c,\eta,z)\in C^\infty(\R\times [0,1],W)\times C^\infty(\R,(0,\infty))\times C^\infty(\R, S^N)
 $$
}of the equations
\begin{numcases}	{}
\nonumber \p_s c(s,t) + J_t^{z(s), \eta(s)}\big(c(s,t) \big)\Big(\p_t c(s,t) -\eta(s)X_F\big(c(s,t)\big)\Big) = 0,\label{eq: floereq} \\
\nonumber	\p_s \eta(s) + \int_0^1 F(c(s,t)) dt=0,\label{eq: lagmuleq} \\
	\p_s z(s)-\nabla_{g_N} h_N(z(s))=0.\label{eq: flowlineonSN}
\end{numcases}
with boundary condition $ 
c(s,0),c(s,1)\in L$ and asymptotic condition $$\displaystyle \lim_{s\to \pm\infty}(c(s),\eta(s),z(s))=(c_\pm,\eta_\pm,z_\pm).$$ The group $\R$ acts on $\widehat{\mathcal{M}}(p_-,p_+)$ by time-shift and the quotient space is denoted by $\mathcal{M}(p_-,p_+)$. Abbreviate $P_\pm := [p_{\pm}] \in \crit(\mathcal{A}^{F,N})$. The space of gradient flow lines from $P_-$ to $P_+$ is defined by
$$
\mathcal{M}(P_-,P_+) := \bigcup_{p_\pm \in P_\pm}\mathcal{M}(p_-,p_+).
$$
This space admits the free $\Z_2$-action induced by $\mathcal{R}$: 
$$
(c(s,t),\eta(s),z(s))\mapsto (\rho(c(s,1-t)),\eta(s),-z(s)).
$$
The quotient space
$$
\mathcal{M}_{\Z_2}(P_-,P_+)
$$
is called the \emph{moduli space of gradient flow lines from $P_-$ to $P_+$}.
For a generic choice of ${\bf J}$ and $g_N$ this moduli space is a smooth manifold, see for example \cite{AM18}.
For two critical points $P_{\pm} \in \crit(\mathcal{A}^{F,N})$, compactness results established in \cite{CF09, Me14} show that the zero-dimensional component $\mathcal{M}^0_{\Z_2}(P_-, P_+)$  of the moduli space $\mathcal{M}_{\Z_2}(P_-, P_+)$ is a finite set.
We set 
$\nu(P_-,P_+)\equiv \# \mathcal{M}_{\Z_2}^0(P_-,P_+)$ modulo two.
The \emph{equivariant Rabinowitz Floer differential}
$
\p^{{\bf J},g_N}\colon \FC(F,h_N)\to \FC(F,h_N)
$
 is then defined as the linear extension of
 $$
 \p^{{\bf J},g_N}(P )=\sum_{P'\in \crit(\mathcal{A}^{F,N})}\nu(P',P )P'.
$$
A standard argument shows that
$
\p^{J,g_N}\circ \p^{J,g_N}=0.
$ 
The resulting homology $\FH(F,h_N)$ does not depend on the choice of $(F,h_N,{\bf J},g_N)$ by the usual continuation argument.  
We then obtain canonical  homomorphisms  
$$
\iota_N: \FH(F,h_N)\to \FH(F,h_{N+1})
$$
using the following data:
\begin{itemize}
\item The inclusion $i_N \colon S^N \hookrightarrow S^{N+1}, z \mapsto (z,0)$ is $\Z_2$-equivariant and hence $\crit(\mathcal{A}^{F,N}) \subset \crit( \mathcal{A}^{F,N+1})$.
\item The Riemannian metric $g_{N+1}$ on $S^{N+1}$ satisfies $i_N^*g_{N+1}=g_N$.
\item Choose a Morse function $h_{N+1}$ on $S^{N+1}$ such that $i_N^*h_{N+1}=h_N$ and $(h_{N+1}, g_{N+1})$ is Morse--Smale.
\item The family ${ \bf J}_{N+1} = \{ J_t^{z,\eta} \}$ restricts to ${ \bf J}_N = \{ {J'}_t^{z,\eta}\}$, namely, $J_t^{z,\eta} = {J'}_t^{z,\eta}$ for $ z \in S^N$.
\end{itemize}
These homomorphisms commute with the continuation maps, so that we can take   the direct limit
$$
\RFH^{\Z_2}(\Sigma,W):=\varinjlim_{N} \FH(F,h_N).
$$
which is referred to as  the \emph{equivariant Lagrangian Rabinowitz Floer homology} of the hypersurface $\Sigma$ in $W$.

\subsubsection{The case of cotangent bundles} \label{rmk:cotanegung}
One can define a grading on the equivariant Lagrangian Rabinowitz Floer homology groups, but it is in general not globally well-defined. 
 For cotangent bundles, however, there exists a so-called vertical preserving symplectic trivialization, see \cite{APS08,AS06}, 
 which enables us to define a well-defined global grading.

 We  recall the setup in Section \ref{sec:main}. 
 Let   $N$ be a  closed and  connected $n$-dimensional smooth manifold equipped with an involution $f$, whose fixed point set  $Q = \mathrm{Fix}(f) \subset N$ is non-empty and connected.
 The involution $f$ induces the exact anti-symplectic involution $\rho$ on $(T^*N, \lambda)$, and its fixed point set is given by the conormal bundle $N^*Q$ of $Q$ in $T^*N$, which  is a connected exact Lagrangian submanifold.   
Let $\Sigma \subset T^*N$ be a $\rho$-invariant fiberwise starshaped hypersurface   which is transverse to $N^*Q $. Abbreviate  $\mathcal{L}=\Sigma \cap N^*Q$, which is assumed to be non-empty.

 For a non-degenerate Reeb chord $(c,\eta)$ on $\Sigma$ with endpoints in $\mathcal{L}$,  we define the   index $\mu(c,\eta)$ as {
\begin{equation}\label{eq: maslovindex}
 \mu(c, \eta) := \mu_{\mathrm{RS}}(c,\eta)-\frac{n-1}{2},
\end{equation}
}where $\mu_{\mathrm{RS}}$ denotes the  Robbin--Salamon index, see \cite{RSindex}. 
   The index of a critical point   $P = [(c,\eta, z)]$ of $\mathcal{A}^{F,N}$ is then defined as
 \[
 | P | := \mu(c,\eta) + \mathrm{ind}(z; h_N)\in \Z,
 \]
where $\mathrm{ind}(z;h_N)$ denotes the Morse index of $z\in \crit(h_N)$. Arguing as in \cite[Proposition 1.3]{AM18}, or  \cite[Appendix A]{CF09}, 
we find that
if $P_{\pm} = [(c_{\pm}, \eta_{\pm}, z_{\pm})] \in \crit(\mathcal{A}^{F,N})$, then 
\begin{align*}
\dim \mathcal{M}_{\Z_2}   (P_-,P_+) &= \mu(c_+, \eta_+) - \mu(c_-, \eta_-) + \mathrm{ind} (z_+;h_N) - \mathrm{ind} (z_-;h_N) -1 \\
&= \lvert P_+ \rvert - \lvert P_- \rvert -1.
\end{align*}
 Therefore, if $|P_+| - |P_-| =1$, then $\mathcal{M}_{\Z_2}(P_-, P_+)$  is a compact zero-dimensional manifold and hence  a finite set.
 We define the boundary operator by counting the elements in this space module two and obtain a graded Floer homology group $\FH_*(F,h_N)$.
 In this case, the canonical homomorphisms  $\iota_N: \FH_*(F,h_N)\to \FH_*(F,h_{N+1})$ respects the grading. 
 The rest of the construction of the equivariant Lagrangian Rabinowitz Floer homology groups is word-for-word identical to the general case.

\begin{remark}
Our grading differs from  the one in \cite{Me14} for   Lagrangian Rabinowitz Floer homology of cotangent bundles  by a global shift.

\end{remark}

\subsection{Local non-equivariant Lagrangian Rabinowitz Floer homology}\label{sec: localnonequi}
In this section, we work locally near a given   Reeb chord.  
Let $(W, \lambda)$ be an  exact symplectic manifold and let $\Sigma$ be a    hypersurface   in $W$ that is of contact type with respect to a contact form $\alpha := \lambda|_{\Sigma}$.
We \textit{do not} assume that $\alpha$ is non-degenerate.
Suppose that $L$ is a non-empty connected exact Lagrangian submanifold in $W$ such that it intersects $\Sigma$ transversally and the intersection $\mathcal{L}=L \cap \Sigma$ is a non-empty Legendrian submanifold in $\Sigma$.
  
Consider the (non-equivariant) Rabinowitz action functional { $\mathcal{A}^F\colon \mathscr{P}\times (0,\infty)\to \R$} given by
$$ 
\mathcal{A}^F(c,\eta):=\ell(c(0)) - \ell( c(1)) + \int_0^1 c^* \lambda - \eta \int_0^1 F(c(t))dt.
$$ 
Under generic perturbations of $F$, this functional is Morse.

For a subset $\mathcal{F}\subset \crit(\mathcal{A}^F)$ we write
$$
\im(\mathcal{F}):=\bigcup_{(c,\eta)\in \mathcal{F}} \im (c) \times \{\eta\}\subset W\times (0,\infty).
$$
A (possibly disconnected) subset $\mathcal{F}$ of $\crit(\mathcal{A}^F)$ is called \emph{isolated} if there is an open neighborhood $U$ of $\im(\mathcal{F})$ in $W\times (0,\infty)$ such that the closure $\overline{U}$ is compact and $\overline{U}\cap \im(\crit(\mathcal{A}^F))=\im(\mathcal{F})$. We call $U$ an \emph{isolating neighborhood} of $ \mathcal{F} $. Without loss of generality we may assume that
$$
U=U_1\times U_2
$$
for some open sets $U_1\subset W$ and $U_2\subset (0,\infty)$. Now we define the \emph{local non-equivariant Lagrangian Rabinowitz Floer homology} $\RFH^{\loc}(\mathcal{F})$ of $\mathcal{F}$.   Intuitively, this is the  Floer homology associated to the local Rabinowitz action functional
$$
\mathcal{A}^{F}_{U} \colon \mathscr{P}_{U_1}\times U_2\to \R,\quad \mathcal{A}^F_{U_1}(c,\eta)=\mathcal{A}^F(c,\eta),
$$
where $\mathscr{P}_{U_1}=\{c\in \mathscr{P}\mid \im(c)\subset U_1\}$.
In our construction below, we follow  \cite{CFHW,GinzGurel, Mclean}, where more standard local Floer homologies are constructed.

We begin with the following crucial lemma, which can be proved by a slight modification of the proof of  \cite[Lemma 2.1]{CFHW} by applying  the  compactness result \cite[Theorem 3.1]{CF09}.

\begin{lemma}\label{lem: localconv} Let $\mathcal{F}$ be an isolated subset of $\crit(\mathcal{A}^F)$ and $U=U_1 \times U_2 $ an isolating neighborhood of $ \mathcal{F} $. Suppose that all  Reeb chords   in  $\mathcal{F}$ have the common length $\eta_{\mathcal{F}}$.
 	 Let $F_\nu$ and ${\bf J}_\nu$ be   sequences of Hamiltonians converging to $F$  and  of $d\lambda$-compatible almost complex structures converging to a  $d\lambda$-compatible almost complex structure ${\bf J}$ in the $C^\infty$-topology, respectively.
 Then for any neighborhood $V=V_1\times V_2$ of $\im(\mathcal{F})$  with $\overline{V} \subset U$, there exists $\nu_0\gg 1$ such that for $\nu\ge \nu_0$ we have
\begin{itemize}
	\item $($Critical points$)$ If $(c,\eta)\in \crit(\mathcal{A}^{F_\nu})$ such that $\im(c)\times\{\eta\}\subset U$, then $\im(c)\times \{\eta\}\subset V$.
	\item $($Gradient flow lines$)$ 
If
$
(\tilde{c} \colon \R\times [0,1]\to U_1,\ \tilde{\eta} \colon \R \to U_2)
$ is a gradient flow line of $\mathcal{A}^{F_\nu}$ $($with respect to the $L^2$-metric induced by  $({\bf J}_\nu, g_N)$$)$ connecting critical points $(c^\pm,\eta^\pm)\in \crit(\mathcal{A}^{F_\nu})$ such that $\im(c^\pm)\times\{\eta^\pm\}\subset U$, then $\im(\tilde{c},\tilde{\eta})\subset V$.
\end{itemize}
\end{lemma}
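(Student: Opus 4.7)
The plan is to argue both parts by contradiction, in each case extracting a convergent subsequence whose limit is forced by the isolating hypothesis into $\mathcal{F}$, and in the flow line case using the common action of $\mathcal{F}$ to conclude that the limit is constant.

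For the first bullet, I would suppose that for some $V=V_1\times V_2$ with $\overline V\subset U$ there is a subsequence $\nu_k\to\infty$ and critical points $(c_{\nu_k},\eta_{\nu_k})\in\crit(\mathcal{A}^{F_{\nu_k}})$ with $\im(c_{\nu_k})\times\{\eta_{\nu_k}\}\subset U$ but not contained in $V$. The critical point equations read $\dot c_{\nu_k}=\eta_{\nu_k}X_{F_{\nu_k}}(c_{\nu_k})$, $c_{\nu_k}(t)\in F_{\nu_k}^{-1}(0)$, with endpoints on $L$. Because $\overline{U_1}$ and $\overline{U_2}$ are compact and $F_{\nu}\to F$ in $C^\infty$, we have $X_{F_\nu}\to X_F$ uniformly on $\overline{U_1}$; hence $\|\dot c_{\nu_k}\|_{C^0}$ is uniformly bounded. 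Arzel\`a--Ascoli plus ODE bootstrapping yields $C^\infty$ convergence (along a further subsequence) of $(c_{\nu_k},\eta_{\nu_k})$ to a limit $(c,\eta)$ solving the critical point equations for $\mathcal{A}^F$, with $\im(c)\times\{\eta\}\subset\overline U$. By the isolating property of $\mathcal{F}$, $(c,\eta)\in\mathcal{F}$, so $\im(c)\times\{\eta\}\subset V$. But then for all large $k$ we also have $\im(c_{\nu_k})\times\{\eta_{\nu_k\}}\subset V$, a contradiction.

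For the second bullet, I would similarly assume gradient flow lines $(\tilde c_{\nu_k},\tilde\eta_{\nu_k})$ of $\mathcal{A}^{F_{\nu_k}}$ with respect to the metric induced by $(\mathbf J_{\nu_k},g_N)$, connecting critical points whose traces lie in $U$, yet with some $(s_k,t_k)$ at which $(\tilde c_{\nu_k}(s_k,t_k),\tilde\eta_{\nu_k}(s_k))\notin V$. Since $\tilde\eta_{\nu_k}$ takes values in the bounded set $U_2$, we have an a priori bound on the Lagrange multiplier, which is the essential hypothesis for the compactness theorem of Cieliebak--Frauenfelder \cite{CF09}. Applying that result to the sequence $\mathcal{A}^{F_{\nu_k}}$ with $\mathbf J_{\nu_k}\to \mathbf J$ and $F_{\nu_k}\to F$ in $C^\infty_{\mathrm{loc}}$, after reparametrising $s$-shifts around the $s_k$, we obtain a $($possibly broken$)$ gradient flow line of $\mathcal{A}^F$ whose image lies in $\overline U$ and whose asymptotic critical points therefore belong to $\overline U\cap\im(\crit(\mathcal{A}^F))=\im(\mathcal{F})$. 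The key observation is that $\mathcal{A}^F$ is constant on $\mathcal{F}$: since $L=\mathrm{Fix}(\rho)$ is isotropic ($\rho^*\lambda=-\lambda$ forces $\lambda|_L=0$, so we may take $\ell\equiv 0$), at any $(c,\eta)\in\crit(\mathcal{A}^F)$ one has $\mathcal{A}^F(c,\eta)=\int_0^1 c^*\lambda=\eta\int_0^1\alpha(R)\,dt=\eta$, which equals the common value $\eta_{\mathcal{F}}$.

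Because the action strictly decreases along any non-constant gradient flow line, each piece of the limiting (broken) trajectory must be constant, hence equal to a single critical point in $\mathcal{F}\subset V$. This forces the traces of $(\tilde c_{\nu_k},\tilde\eta_{\nu_k})$ near the reparametrised times $s_k$ to lie in $V$ for large $k$, contradicting the choice of $(s_k,t_k)$. The expected main obstacle is precisely the compactness step for flow lines: one must verify that the a priori $L^\infty$-bound on $\tilde\eta$ provided by confinement in $U_2$, together with the $C^\infty$ convergence of the data, allows a direct invocation of \cite[Theorem 3.1]{CF09} in the Lagrangian boundary setting with perturbed Hamiltonians. Once this is in place, the action-constancy on $\mathcal{F}$ converts the abstract compactness output into the desired confinement inside $V$, and the rest is a routine adaptation of \cite[Lemma 2.1]{CFHW}.
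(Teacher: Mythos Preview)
Your argument is correct and is precisely the approach the paper indicates: it does not give a proof but only states that the lemma ``can be proved by a slight modification of the proof of \cite[Lemma~2.1]{CFHW} by applying the compactness result \cite[Theorem~3.1]{CF09},'' which is exactly the contradiction/Arzel\`a--Ascoli argument for critical points together with the Rabinowitz compactness plus action-constancy argument for flow lines that you carry out. One small remark: your identification $\mathcal{A}^F(c,\eta)=\eta$ on critical points uses $\lambda|_L=0$, which is not among the hypotheses of Section~\ref{sec: localnonequi} in its stated generality (there $L$ is only assumed exact), but it does hold throughout the paper's applications since $L=\mathrm{Fix}(\rho)$ with $\rho^*\lambda=-\lambda$; in the general case one should read ``common length'' as ``common action value,'' after which your argument goes through unchanged.
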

Let $\mathcal{F}\subset \crit(\mathcal{A}^F)$ be an isolated subset such that all   Reeb chords in $\mathcal{F}$ have the common length $\eta_\mathcal{F}$. 
Choose an isolating neighborhood $U=U_1 \times U_2$  of $ \mathcal{F} $ and a smaller neighborhood $V=V_1 \times V_2 \subset U$ of $\im(\mathcal{F})$ such that $\overline{V}\subset U$. Now take a Hamiltonian $\tilde{F}\colon W\to \R$ with the following properties.
\begin{enumerate}
	\item On $U_1$, the Hamiltonian $\tilde{F}$ is $C^2$-close to $F$. 
	\item $\tilde{\Sigma}:=\tilde{F}^{-1}(0)\cap U_1$ is a contact-type regular hypersurface,
	\item $L$ intersects  $\tilde{\Sigma}$ transversally,
	\item $\lambda|_{\tilde{\Sigma}}$ is non-degenerate.
\end{enumerate}
Here we made use of \cite[Theorem B.1]{CF09} and the fact that the contact-type and transversality conditions are open.

The \emph{local Rabinowitz Floer chain group} $\FC^{\loc}(U;\tilde{F})$ is defined as the $\Z_2$-vector space generated by the critical points of $\crit(\mathcal{A}^{\tilde{F}}_U)$, i.e.,
$$
\FC^{\loc}(U;\tilde{F})=\bigoplus_{x\in \crit(\mathcal{A}^{\tilde{F}})} \Z_2\langle x \rangle.
$$
Since the Lagrange multipliers of $\crit(\mathcal{A}^{\tilde{F}}_U)$ are uniformly bounded, we do not use the Novikov-type coefficients as in the previous section. For a generic ${\bf J}$, the \emph{local Rabinowitz Floer differential}
$$
\p^{\loc,{\bf J}}\colon \mathrm{FC}^{\loc}(U;\tilde{F})\to \mathrm{FC}^{\loc}(U;\tilde{F})
$$ 
is defined by counting rigid elements of the moduli spaces of gradient flow lines that are  contained in $U$. By Lemma \ref{lem: localconv}, all critical points of $\crit(\mathcal{A}^{\tilde{F}}_{U})$ and $(\tilde{F},{\bf J})$-gradient flow lines have image   in $V$, and hence are bounded uniformly away   from the boundary $\p U$ (possibly with corners). Hence, a standard argument shows that $\p^{\loc,{\bf J}}$ is well-defined and satisfies $\p^{\loc,{\bf J}}\circ \p^{\loc,{\bf J}}=0$. The resulting homology 
$$
\RFH^{\loc}(\mathcal{F}):=H\Big(\FC(U;\tilde{F}),\p^{\loc, {\bf J}}\Big)
$$
is called the \emph{local Rabinowitz Floer homology} of $\mathcal{F}$. By a usual continuation argument, which requires a parametrized version of Lemma \ref{lem: localconv}, the homology does not depend on the choice of $(U,V,\tilde{F},{\bf J})$.

\subsection{Local equivariant Lagrangian Rabinowitz Floer homology} \label{sec:local} As in the previous section,  $(W, \lambda)$ is an  exact symplectic manifold and   $\Sigma$ is a    hypersurface   in $W$ that is of contact type with respect to a contact form $\alpha := \lambda|_{\Sigma}$.
We again \textit{do not} assume that $\alpha$ is non-degenerate. Assume that $(W, \lambda)$ admits an exact anti-symplectic involution $\rho$.
The restriction $\rho|_{\Sigma}$ is an anti-contact involution on $(\Sigma, \alpha)$.
Assume further that the fixed point set $L= \mathrm{Fix}(\rho)$ is non-empty and connected, so that  $L$ is an exact Lagrangian submanifold in $(W,\lambda)$.
The intersection $\mathcal{L} = L \cap \Sigma$ is then a Legendrian submanifold in $(\Sigma, \alpha)$, and we also assume that $\mathcal{L}$ is non-empty.
Suppose that $F\colon W \to \R$ is a  defining Hamiltonian for $\Sigma$ which is $\rho$-invariant. 
The functional $\widetilde{\mathcal{A}}^{F,N} \colon \mathscr{P} \times (0,\infty) \times S^N \to \R$ is defined in the same way as in Section \ref{sec:eLRH}. 
We say that  $\mathcal{F}\subset \crit(\mathcal{A}^F)$ is \emph{$\Z_2$-invariant} if it  is invariant under the involution $\mathcal{R} (c,\eta)=(\rho(c(1-t)),\eta)$ on $\mathscr{P}\times (0,\infty)$. Such an $\mathcal{F}$ gives rise to an $\mathcal{R}$-invariant subset $\widetilde{\mathcal{F}}:=\mathcal{F}\times \crit(h_N)\subset \crit(\widetilde{\mathcal{A}}^{F,N})$.

Let $\mathcal{F}$ be an isolated subset of $\crit(\mathcal{A}^F)$ that is $\Z_2$-invariant. An isolating neighborhood $U=U_1\times U_2
$ of $\mathcal{F}$ in $W\times (0,\infty)$ is called \emph{$\Z_2$-invariant} if $U_1$ is $\rho$-invariant. Any $\Z_2$-invariant isolating neighborhood $U$ of $\mathcal{F}$ gives rise to an $\mathcal{R}$-invariant isolating neighborhood $\widetilde{U}:=U\times S^N$ for $\widetilde{\mathcal{F}}=\mathcal{F}\times \crit(h_N)$. The following lemma is immediate from Lemma \ref{lem: localconv}.

\begin{lemma}\label{lem: localconveq} With the notation from   Lemma \ref{lem: localconv},   for any $\Z_2$-invariant neighborhood $V=V_1\times V_2$ of $\mathcal{F}$ whose closure is contained in $U$, there exists $\nu_0\gg 1$ such that for $\nu\ge \nu_0$ we have
\begin{itemize}
	\item $($Critical points$)$ If $(c,\eta,z)\in \crit(\widetilde{\mathcal{A}}^{F,N})$ such that $\im(c)\times \{\eta\}\times \{z\}\subset \widetilde{U}$, then $\im(c)\times \{\eta\}\times \{z\} \in \widetilde{V}$.
	\item $($Gradient flow lines$)$
If
$
(\tilde{c} \colon \R\times [0,1]\to U_1,\ \tilde{\eta} \colon \R \to U_2,\  \tilde{z}\colon \R \to S^N)
$ is a gradient flow line of $\widetilde{\mathcal{A}}^{F_\nu,N}$ $($with respect to the metric induced by $({\bf J}_\nu,g_N))$ connecting critical points $(c^\pm,\eta^\pm,z^\pm)\in \crit(\widetilde{\mathcal{A}}^{F_\nu,N})$ such that $\im(c^\pm)\times \{\eta^\pm\}\times \{z^\pm\}\in \widetilde{U}$, then $\im(\tilde{c},\tilde{\eta},\tilde{z})\subset \widetilde{V}$.
\end{itemize}
\end{lemma}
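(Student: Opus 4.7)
The plan is to deduce this equivariant statement directly from Lemma \ref{lem: localconv} by exploiting the fact that the equivariant Rabinowitz action functional splits as a sum:
\[
\widetilde{\mathcal{A}}^{F,N}(c,\eta,z) = \mathcal{A}^F(c,\eta) + h_N(z),
\]
and the $L^2$-metric in \eqref{eq: L2metric} is a direct-sum metric, with the Rabinowitz $L^2$-piece on $\mathscr{P}\times (0,\infty)$ orthogonal to the round metric $g_N$ on $S^N$. In particular, $\crit(\widetilde{\mathcal{A}}^{F,N}) = \crit(\mathcal{A}^F)\times \crit(h_N)$, and a negative gradient flow line of $\widetilde{\mathcal{A}}^{F,N}$ is precisely a pair consisting of a Rabinowitz gradient flow line of $\mathcal{A}^F$ and a gradient flow line of $h_N$ on $S^N$; the latter never leaves $S^N$, so the $\tilde z$-component plays no role in convergence estimates.

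For the critical-point bullet, suppose $(c,\eta,z) \in \crit(\widetilde{\mathcal{A}}^{F_\nu,N})$ with $\im(c)\times\{\eta\}\times\{z\}\subset \widetilde{U} = U\times S^N$. By the decoupling, $(c,\eta)\in \crit(\mathcal{A}^{F_\nu})$ with $\im(c)\times\{\eta\}\subset U$, so Lemma \ref{lem: localconv} provides an index $\nu_0$ such that $\im(c)\times\{\eta\}\subset V$ for all $\nu\ge \nu_0$. Since $z\in S^N$ tautologically, this yields $\im(c)\times\{\eta\}\times\{z\}\subset V\times S^N = \widetilde{V}$.

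For the gradient-flow-line bullet, let $(\tilde{c},\tilde{\eta},\tilde{z})$ be a gradient flow line of $\widetilde{\mathcal{A}}^{F_\nu,N}$ (with respect to the metric induced by $({\bf J}_\nu,g_N)$) connecting critical points whose orbits lie in $\widetilde{U}$. By the product structure, $(\tilde{c},\tilde{\eta})$ solves the Rabinowitz gradient flow equation for $\mathcal{A}^{F_\nu}$ with ${\bf J}_\nu$, and its asymptotic critical points have orbits contained in $U$. A direct application of Lemma \ref{lem: localconv} to this component gives $\im(\tilde{c},\tilde{\eta})\subset V$ for $\nu\ge \nu_0$. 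Combined with $\im(\tilde{z})\subset S^N$, we obtain $\im(\tilde{c},\tilde{\eta},\tilde{z})\subset V\times S^N = \widetilde{V}$.

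There is no genuine obstacle, which is why the paper terms the conclusion \emph{immediate}. The $\Z_2$-invariance assumption on $V$ plays no role in the compactness estimates themselves; it is needed only to ensure that $\widetilde{V} = V\times S^N$ is $\mathcal{R}$-invariant, so that the subsequent quotient construction of the local equivariant chain complex is well defined. The same threshold $\nu_0$ furnished by Lemma \ref{lem: localconv} works verbatim for the equivariant lemma.
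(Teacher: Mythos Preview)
Your proposal is correct and matches the paper's approach exactly: the paper simply states that the lemma is immediate from Lemma~\ref{lem: localconv}, and your argument spells out precisely why, via the product splitting $\widetilde{\mathcal{A}}^{F,N}=\mathcal{A}^F+h_N$ and the decoupled metric. Your observation that the $\Z_2$-invariance of $V$ is irrelevant to the compactness itself (and only matters for the later $\mathcal{R}$-equivariant construction) is also accurate.
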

Let $\widetilde{\mathcal{F}} = \mathcal{F}\times \crit(h_N)$, $\widetilde{U}$, and $\widetilde{V}$ be as in the previous lemma. Considering the local action functional
$$
\widetilde{\mathcal{A}}^{F,N}_{\widetilde{U}}\colon \mathscr{P}_{U_1}\times U_2\times S^N \to \R,
$$ 
we shall define $\RFH^{\loc, \Z_2}(\widetilde{\mathcal{F}})$ for   $\widetilde{\mathcal{F}}$.   The construction is exactly   as in Section \ref{sec: localnonequi}: We take a $\rho$-invariant Hamiltonian $\tilde{F}\colon W \to \R$, which is  a $C^2$-small perturbation of $F$ on $U_1$, satisfying   properties (1)--(4) given in Section \ref{sec: localnonequi} and assume that the pair $(h_N,g_N)$ is Morse--Smale.

The \emph{equivariant local Rabinowitz Floer chain group} $\FC^{\loc,\Z_2}(\widetilde{U};\tilde{F},h_N)$ is defined as the $\Z_2$-vector space generated by  the critical points of $\mathcal{A}^{\tilde{F},N}_{\tilde{U}}$.  Choose a generic ${\bf J}$ which is  $\rho$-anti-invariant. The \emph{local equivariant   Rabinowitz Floer differential}
$$
\p^{\loc, \Z_2, {\bf J},g}\colon \FC^{\loc,\Z_2} (\widetilde{U};\tilde{F},h_N)\to \FC^{\loc,\Z_2} (\widetilde{U};\tilde{F},h_N)
$$ 
is defined by counting rigid elements in the moduli spaces of those gradient flow lines that are contained in $\tilde{U}$. By Lemma \ref{lem: localconveq}, the differential $\p^{\loc, \Z_2,{\bf J},g_N}$ is well-defined and a standard argument shows that $ \p^{\loc, \Z_2,{\bf J},g_N} \circ \p^{\loc, \Z_2,{\bf J},g_N}=0$. 
As in Section \ref{sec:eLRH} we have canonical homomorphisms 
\[
\iota_N \colon  \FH^{\loc, \Z_2}(\widetilde{U};\tilde{F},h_N) \to  \FH^{\loc, \Z_2}(\widetilde{U};\tilde{F},h_{N+1}) 
\]
and the associated direct limit   
$$
\FH^{\loc,\Z_2}(\widetilde{U};\tilde{F}) :=\varinjlim_{N} \FH^{\loc, \Z_2}(\widetilde{U};\tilde{F},h_N)
$$
is independent of the choice of $(U,V,\tilde{F},h_N,{\bf J},g_N)$ by a continuation argument. The homology $\RFH^{\loc, \Z_2} (\widetilde{\mathcal{F}}) := \FH^{\loc,\Z_2}(\widetilde{U};\tilde{F})$ is referred to as the \emph{local equivariant  Lagrangian Rabinowitz Floer homology} of $\widetilde{\mathcal{F}}$.
In particular, we have the following invariance property. See \cite[p.\ 275]{CF09} for details.
\begin{proposition}\label{prop: invariance}
	We have a canonical isomorphism
	$$
	\FH^{\loc, \Z_2} (\widetilde{U};\tilde{F}_0)\cong 	\FH^{\loc, \Z_2} (\widetilde{U};\tilde{F}_1)
	$$
for any two $\tilde{F}_0,\tilde{F}_1$ satisfying  properties of (1)--(4) given in Section \ref{sec: localnonequi}.
\end{proposition}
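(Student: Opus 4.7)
The plan is to adapt the standard continuation argument of Floer theory to the local equivariant setting, relying crucially on the parametrized version of the compactness statement in Lemma \ref{lem: localconveq}. Since both $\tilde{F}_0$ and $\tilde{F}_1$ are $C^2$-close to the original defining Hamiltonian $F$ on $U_1$ and $\rho$-invariant, I would first connect them by the straight-line path $\tilde{F}_s := (1-s)\tilde{F}_0 + s\tilde{F}_1$, $s \in [0,1]$. This family consists of $\rho$-invariant Hamiltonians that remain $C^2$-close to $F$ on $U_1$; by the openness of the contact-type and transversality conditions (used to establish property (2), (3)) and by shrinking $U$ if necessary, properties (1)--(4) from Section \ref{sec: localnonequi} hold uniformly in $s$.

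Next, I would fix a smooth cut-off function $\beta \colon \R \to [0,1]$ with $\beta(s) = 0$ for $s \le -1$ and $\beta(s) = 1$ for $s \ge 1$, and define a continuation map
\[
\Phi \colon \FC^{\loc,\Z_2}(\widetilde{U}; \tilde{F}_0, h_N) \longrightarrow \FC^{\loc,\Z_2}(\widetilde{U}; \tilde{F}_1, h_N)
\]
by counting, modulo two, the rigid solutions of the $s$-dependent gradient flow equation of $\widetilde{\mathcal{A}}^{\tilde{F}_{\beta(s)}, N}$ whose image lies entirely in $\widetilde{U}$, with asymptotes at $s = \mp \infty$ being critical points of $\widetilde{\mathcal{A}}^{\tilde{F}_0, N}_{\widetilde{U}}$ and $\widetilde{\mathcal{A}}^{\tilde{F}_1, N}_{\widetilde{U}}$, respectively. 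To preserve $\Z_2$-equivariance I would use a family $\{{\bf J}_s\}$ of $\rho$-anti-invariant $\omega$-compatible almost complex structures along the homotopy.

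The main obstacle is to establish the parametrized localization: for sufficiently small $C^2$-size of the perturbations $\tilde{F}_0 - F,\ \tilde{F}_1 - F$ on $U_1$, every such $s$-dependent flow line in fact stays in a relatively compact subset of $\widetilde{U}$, so that the moduli spaces are compact up to breaking and $\Phi$ is well-defined. This is proved by a contradiction argument verbatim as in the proof of \cite[Lemma 2.1]{CFHW}: if not, one extracts a sequence of flow lines escaping a prescribed neighborhood $\widetilde{V} \Subset \widetilde{U}$, uses the uniform action and Lagrange-multiplier bounds inherent to the Rabinowitz setup together with the compactness theorem \cite[Theorem 3.1]{CF09}, and extracts a limit that is a gradient flow line of $\widetilde{\mathcal{A}}^{F, N}$ between critical points of $\mathcal{F} \times \crit(h_N)$ whose image exits $\widetilde{U}$, contradicting the isolation of $\widetilde{\mathcal{F}}$. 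The $\Z_2$-symmetry is preserved by the $\mathcal{R}$-action on the homotopy data, so the argument descends to the quotient.

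Once this parametrized localization is in place, the remaining steps are entirely standard. A broken-trajectory analysis at the boundary of the one-dimensional component of the continuation moduli space shows that $\Phi$ is a chain map. Running the same construction with the reversed homotopy and a two-parameter homotopy of homotopies produces a chain homotopy between the composition of $\Phi$ with the reverse continuation map and the identity, using the same parametrized localization at each stage. Finally, since the continuation maps are built from asymptotics and homotopies that extend the equivariant inclusion $S^N \hookrightarrow S^{N+1}$, they commute with the structural maps $\iota_N$, so passing to the direct limit in $N$ yields the asserted canonical isomorphism $\FH^{\loc,\Z_2}(\widetilde{U}; \tilde{F}_0) \cong \FH^{\loc,\Z_2}(\widetilde{U}; \tilde{F}_1)$.
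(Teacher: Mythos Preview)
Your proposal is correct and follows exactly the route the paper indicates: the paper does not give a detailed proof but merely invokes ``a usual continuation argument, which requires a parametrized version of Lemma~\ref{lem: localconv}'' and refers to \cite[p.~275]{CF09}. One small correction: you cannot expect property (4) (non-degeneracy of $\lambda|_{\tilde\Sigma}$) to hold for every $\tilde F_s$ in the straight-line family, since non-degeneracy is only generic; but this is harmless, because the continuation map only requires Morse functionals at the asymptotic ends $s=0,1$, while for intermediate $s$ you merely need the parametrized localization (your third paragraph), which uses only closeness to $F$.
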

\begin{remark}
	In the case that $\FH_*^{\loc, \Z_2}$ is equipped with  a $\Z$-grading, the above isomorphism respects the grading.
\end{remark} 
From now on, we restrict our setup to the case $W=T^*N$ equipped with the  Liouville 1-form $\lambda$, where $N$ is an $n$-dimensional smooth manifold. By the discussion  in Section \ref{rmk:cotanegung}, the local eLRFH admits a $\Z$-grading.   The following proposition tells us that if $\mathcal{F}$ consists of exactly one $\Z_2$-pair of Reeb chords, then $\RFH_*^{\loc,\Z_2}(\widetilde{\mathcal{F}})$ has only one generator, whose index is given by the  index of the Reeb chord.

\begin{proposition}\label{prop: eRFHcomputation}
Suppose that the  $\Z_2$-invariant subset 
	$ {\mathcal{F}}=\{(c,\eta ),(\bar{c},\eta )\}$ of $\crit(\mathcal{A}^{F})$ is isolated and $(c, \eta)$ is non-degenerate.  In particular, $\eta \neq 0$. Let $U=U_1\times U_2$ be an isolating neighborhood of $\mathcal{F}$.  Then we have
	$$
	\FH_*^{\loc, \Z_2}(\widetilde{U};F) = \begin{cases}
		\Z_2 & \text{if $*=\mu(c,\eta) $},\\
		0 & \text{otherwise},
	\end{cases}
	$$
	where $\mu(c,\eta)=\mu_{
\RS}(c,\eta)-\frac{n-1}{2}$.
	In particular, $\RFH_*^{\loc, \Z_2}(\widetilde{\mathcal{F}})=\FH_*^{\loc, \Z_2}(\widetilde{U};F)$.
\end{proposition}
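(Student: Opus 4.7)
The plan is to compute the local equivariant chain complex explicitly. Since $(c,\eta)$, and hence $(\overline c,\eta)$ by Remark \ref{rmk:index}(2), is non-degenerate and $\mathcal{F}$ is isolated in $U$, the Hamiltonian $F$ itself satisfies properties (1)--(4) of Section \ref{sec: localnonequi}, so we may take $\tilde F = F$ in the construction. Choose any Morse--Smale pair $(h_N,g_N)$ with $h_N$ invariant under the antipodal map (for instance, a standard height-type function whose $2(N{+}1)$ critical points come in antipodal pairs of Morse indices $0,0,1,1,\dots,N,N$). Then the critical points of $\widetilde{\mathcal{A}}^{F,N}$ lying in $\widetilde{U}=U\times S^N$ form exactly $\widetilde{\mathcal{F}}=\mathcal{F}\times \crit(h_N)$ and are all non-degenerate. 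The free $\Z_2$-action $\mathcal{R}((c,\eta),z)=((\overline c,\eta),-z)$ yields a quotient chain group $\FC^{\loc,\Z_2}(\widetilde{U};F,h_N)$ freely generated, as a $\Z_2$-vector space, by the representatives $[(c,\eta),z]$ for $z\in \crit(h_N)$, with grading $\mu(c,\eta)+\mathrm{ind}(z;h_N)$.

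The central step is to compute the differential. Because the $L^2$-metric in \eqref{eq: L2metric} is a product, the gradient flow equation decouples: the $z$-component is a Morse trajectory of $(h_N,g_N)$, independent of the Rabinowitz variable, and the $(c,\eta)$-component satisfies the Rabinowitz equation (with ${\bf J}$ that may depend on $z$, or equally well a $z$-independent choice compatible with the $\rho$-anti-invariance constraint). By Lemma \ref{lem: localconveq}, every local Floer trajectory with endpoints in $\widetilde{\mathcal{F}}$ stays within a preassigned small neighbourhood $\widetilde{V}\Subset \widetilde{U}$, and inside it the only Rabinowitz critical points are $(c,\eta)$ and $(\overline c,\eta)$, both of index $\mu(c,\eta)$. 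The translation-quotient moduli space of Rabinowitz flow lines between them therefore has expected dimension $-1$, and a standard equivariant transversality argument inside $\mathcal{J}^N_\rho$ (compare \cite{AM18,CF09}) shows it is empty for generic ${\bf J}$. Consequently every local Floer trajectory has constant Rabinowitz component and reduces to a Morse trajectory on $S^N$.

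Hence $[(c,\eta),z]\mapsto z$ yields a chain isomorphism of $(\FC^{\loc,\Z_2}(\widetilde{U};F,h_N),\partial^{\loc,\Z_2,{\bf J},g_N})$ with $(CM_{*-\mu(c,\eta)}(S^N;h_N,g_N),\partial_{\mathrm{Morse}})$, so that $\FH^{\loc,\Z_2}_*(\widetilde{U};F,h_N)\cong H_{*-\mu(c,\eta)}(S^N;\Z_2)$. Since the homomorphisms $\iota_N$ are induced by the equatorial inclusions $S^N\hookrightarrow S^{N+1}$, the direct limit computes $H_{*-\mu(c,\eta)}(S^\infty;\Z_2)$, which equals $\Z_2$ in degree $\mu(c,\eta)$ and vanishes elsewhere, as asserted; the identification $\RFH^{\loc,\Z_2}_*(\widetilde{\mathcal{F}})=\FH^{\loc,\Z_2}_*(\widetilde{U};F)$ then follows from Proposition \ref{prop: invariance}. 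The main technical point, and the place where the non-degeneracy hypothesis on $(c,\eta)$ is essentially used, is the decoupling step: one must verify via Lemma \ref{lem: localconveq} that the Rabinowitz component of any local Floer trajectory is confined to the region containing only the two equal-index critical points $(c,\eta),(\overline c,\eta)$, and then that generic $\rho$-anti-invariant ${\bf J}$ makes the negative-dimensional moduli space between them empty.
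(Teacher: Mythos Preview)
Your proposal is correct and follows essentially the same route as the paper: identify the local equivariant chain complex $(\FC^{\loc,\Z_2}_*(\widetilde U;F,h_N),\partial^{\loc,\Z_2})$ with the Morse complex of $(h_N,g_N)$ on $S^N$ shifted by $\mu(c,\eta)$, then take the direct limit over $N$. The paper asserts this identification in one sentence; you spell it out, which is fine.

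One remark on the decoupling step: your dimension argument for the vanishing of the Rabinowitz component is slightly delicate because the almost complex structure depends on $z(s)$, so the $(c,\eta)$-equation is not autonomous. A cleaner justification is the energy identity: since $\lambda|_L=0$ (as $\rho^*\lambda=-\lambda$ and $\rho|_L=\id$), we have $\mathcal{A}^F(c,\eta)=\mathcal{A}^F(\overline c,\eta)=\eta$, and decoupling the $S^N$-energy gives
\[
\int_{\R}\bigl(|\partial_s c|^2_{J}+|\partial_s\eta|^2\bigr)\,ds=\mathcal{A}^F(c_+,\eta_+)-\mathcal{A}^F(c_-,\eta_-)=0,
\]
forcing the Rabinowitz component to be constant. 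This avoids any appeal to transversality for a negative-dimensional moduli space and makes the bijection between local Floer trajectories and Morse trajectories on $S^N$ immediate.
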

\begin{proof}
Fix a sequence of Morse functions and metrics
$
\{(h_N\colon S^N\to \R,g_N)\}_{N\in \N}
$
such that for all $N\in \N$
\begin{itemize}
	\item $h_N$ and $g_N$ are invariant under the antipodal map;
	\item the pair $(f_N,g_N)$ is Morse--Smale; and
	\item $i_N^*f_{N+1}=f_N$ and $i_N^*g_{N+1}=g_N$ where $i_N\colon S^N\hookrightarrow S^{N+1}$ denotes the equivariant inclusion.
\end{itemize}
Since the chain complex $(\FC^{\loc, \Z_2}_*(\tilde{U};\tilde{F},h_N),\p^{\loc,\Z_2}_*)$ is canonically identified with the Morse chain complex associated to the pair $(h_N,g_N)$ up to degree shift by $\mu(c,\eta)$, the homology $\FH_*^{\loc,\Z_2}(\widetilde{U};F,h_N)$ has exactly two generators with indices $\mu(c,\eta)$ and $\mu(c,\eta)+N$. Taking the direct limit with respect to $N\to \infty$, we obtain the desired result.
\end{proof}
Consider a family of Hamiltonians  $F_\tau $, $\tau\in \R$. We assume that $\Sigma_\tau:=F^{-1}(\tau)$ is a regular hypersurface which is of contact-type for all $\tau\in (-\epsilon,\epsilon)$ for some $\epsilon>0$.  
We conclude this section with providing the following definition.

\begin{definition}\label{def: unifomlyisolated}
Let $\{\mathcal{F}_\tau\}_{\tau\in (-\epsilon, \epsilon)}$ be a smooth family of subsets $\mathcal{F}_\tau$ of $\crit(\mathcal{A}^{F_\tau})$ and $I\subset (-\epsilon,\epsilon)$ an interval. A subfamily $\{\mathcal{F}_\tau\}_{\tau\in I}$ is called  \emph{uniformly isolated} if 
\begin{itemize}
	\item $\mathcal{F}_\tau$ is isolated for each  $\tau\in I$ in the sense of Section \ref{sec: localnonequi}; and
	\item There exists an open set $U = U_1 \times U_2\subset W\times (0,\infty)$ such that $U$ is an isolating neighborhood  of $\mathcal{F}_\tau$ for all $\tau\in I$.
\end{itemize}

\end{definition}

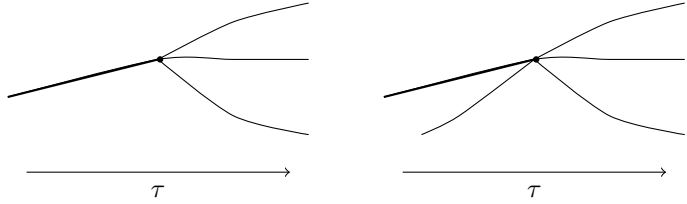
\begin{figure}[h]
\begin{tikzpicture}[scale=0.5]
\draw plot [smooth] coordinates {(-3,0) (1,1) (3,1) (5,1) };
\draw [thick] plot [smooth] coordinates {(-3,0) (1,1)};
\draw plot [smooth] coordinates {(1,1) (3,2) (5,2.5) };
\draw plot [smooth] coordinates {(1,1) (3,-0.5) (5,-1) };
\draw [->] (-2.5,-2)--(4.5,-2);
\draw node at (1,-2.5){$\tau$};
\draw [fill] (1.05,1) circle (0.07cm);
\begin{scope}[xshift=10cm]
\draw [fill] (1.05,1) circle (0.07cm);
\draw plot [smooth] coordinates {(-3,0) (1,1) (3,1) (5,1) };
\draw [thick] plot [smooth] coordinates {(-3,0) (1,1)};
\draw plot [smooth] coordinates {(1,1) (3,2) (5,2.5) };
\draw plot [smooth] coordinates {(1,1) (3,-0.5) (5,-1) };	
\draw plot [smooth] coordinates {(1,1) (-1,-0.5) (-2,-1) };	
\draw [->] (-2.5,-2)--(4.5,-2);
\draw node at (1,-2.5){$\tau$};
\end{scope}

\end{tikzpicture}
  \caption{The subfamily indicated by a bold line on the left-hand side is uniformly isolated, while the right one is not.}
\label{Fig:unifiso}
\end{figure}
 
 \section{Proofs of Theorems \ref{thm: intromainthm}, \ref{thm: intromainthm1}, and \ref{thm: intromainthm2} and of Corollary \ref{thm: intromain}}\label{sec:proofs}

 We are in position to  prove  the theorems and the  corollary stated in Section \ref{sec:bifurc}.
\begin{proof}[Proof of Theorem \ref{thm: intromainthm}  ]   Since $ \{(x_{c_\tau},2\eta_\tau) \}_{\tau\in (-\epsilon,0]}$ is uniformly isolated,  we find an open set $U=U_1 \times U_2 \subset T^*N \times (0,\infty)$ which is an isolating neighborhood of $\{ (c_{\tau}, \eta_{\tau}), (\overline{c}_{\tau}, \eta_{\tau})\}$ for all $\tau \in (-\epsilon, 0]$. This in particular implies that for any $\tau\in (-\epsilon,0]$, we have
$$
\crit(\mathcal{A}^{F_\tau}_U)=\{(c_\tau,\eta_\tau),(\bar{c}_\tau,\eta_\tau) \}.
$$
By Proposition \ref{prop: eRFHcomputation},  for all  $\tau\in (-\epsilon,0)$ close   enough to $0$ we obtain that
\begin{equation}\label{eq: mainresultcomputation}
	 \FH_*^{\loc, \Z_2}(\widetilde{U};F_\tau)= \begin{cases}
		\Z_2 & \text{if $*=\mu(c_{\tau},\eta_{\tau})$},\\
		0 & \text{otherwise}.
	\end{cases}
\end{equation}
We claim that there exists $\epsilon_0>0$ with the following property: for all $0<\tau<\epsilon_0$, there exists a   $\rho$-symmetric periodic orbit $(x_{d_\tau}, 2\sigma_{\tau})$  different from  $(x_{c_\tau}, 2 \eta_{\tau})$. Assume to the contrary that there exists a sequence $\tau_\nu\to 0^+$ such that for all $\nu\in \N$ we have 
$$
\crit(\mathcal{A}^{F_{\tau_\nu}}_U)=\{(c_{\tau_\nu},\eta_{\tau_\nu}),(\bar{c}_{\tau_\nu},\eta_{\tau_\nu})\}.
$$
If $\nu$ is large enough and $0<\delta<\epsilon$ is  so small that $F_{\tau_\nu}$ and $F_{-\delta}$ satisfy the hypothesis of Lemma \ref{lem: localconveq}, then by Proposition \ref{prop: invariance} we obtain a grading-preserving isomorphism
$$
\FH_*^{\loc, \Z_2}(\widetilde{U};F_{-\delta})\cong \FH_*^{\loc, \Z_2}(\widetilde{U};F_{\tau_\nu}).
$$
On the other hand, in view of  Proposition \ref{prop: eRFHcomputation}, for $\nu$ large enough we find that
\begin{equation}\label{eq: result2}
	 \FH_*^{\loc, \Z_2}(\widetilde{U};F_{\tau_\nu})= \begin{cases}
		\Z_2 & \text{if $*=\mu(c_{{\tau_\nu}},\eta_{{\tau_\nu}})$},\\
		0 & \text{otherwise}.
	\end{cases}
\end{equation}
Due to the hypothesis that $\mu(c_{-\delta},\eta_{-\delta})\ne \mu(c_{\tau_\nu},\eta_{\tau_\nu})$, identities \eqref{eq: mainresultcomputation} and \eqref{eq: result2} yield   a contradiction and  this proves the claim.
It follows that there exists $\delta>0$ small enough such that we obtain an additional    pair  of  (possibly degenerate) Reeb chords $\{(d_\tau,\sigma_\tau), (\bar{d}_{\tau}, \sigma_{\tau})\}_{\tau \in (0, \delta)}$   whose associated symmetric periodic orbits $(x_{d_\tau}, 2\sigma_{\tau})$ differ from $(x_{c_\tau}, 2 \eta_{\tau})$. 
Since  one can choose $U$ as small as one likes, it follows that the symmetric periodic orbits $(x_{d_\tau}, 2\sigma_{\tau})$ tend to  $(x_{c_0}, 2 \eta_{0})$ as $\tau \to 0^+$.
This proves the first assertion of the theorem.

To prove the second assertion, assume that there exists a family of pairs of simple Reeb chords  $\{ ( {\mathfrak{c}}_\tau,  \ell_\tau), (\bar{\mathfrak{c}}_{\tau}, \ell_{\tau})\}_{\tau \in (-\epsilon, \epsilon)}$ such that   $ (c_\tau,\eta_\tau) = (\mathfrak{c}_\tau^n , n \ell _\tau)$ for some $n$ and such that     $ (\mathfrak{c}^m_\tau , m \ell _\tau) $ is non-degenerate for every $\tau$ and   for every $m <n$.
Suppose that for each $\tau\in(0, \delta)$ we have $(d_{\tau}, \sigma_{\tau}) = ( \mathfrak{d}_{\tau}^p, p \mathfrak{s}_{\tau}  )$ for some $p>0$, where $(d_{\tau}, \sigma_{\tau})$ is the   Reeb chord obtained in the first assertion and $(\mathfrak{d}_{\tau},  \mathfrak{s}_{\tau} )$ is some simple Reeb chord.

This, however, implies that the family $\{ (x_{\mathfrak{d}_{\tau}},  2\mathfrak{s}_{\tau} )    \}_{\tau \in (0, \delta)}$ is born out of the family $\{(x_{\mathfrak{c}_{\tau }}^{n/p},(n/p)2\eta_{\tau }) \}_{\tau \in (-\epsilon, \epsilon)}$, which contradicts the condition on the minimality of $n$. This proves the second assertion.

To conclude the proof, we now assume that $\Sigma_{\tau}$ is compact for every $\tau \in (0, \delta)$ and that all Reeb chords with image  in $\Sigma_{\tau} \cap U_1$ and with lengths in $U_2$ for $\tau \in (0, \delta)$ are non-degenerate.  
We claim that  there exists $\delta_0 \in (0, \delta]$ such that  for all $\tau \in [0, \delta_0)$ we have $\im ( \crit (\mathcal{A}^{F_{\tau}})) \cap \p \overline{U} = \emptyset$.
 Recall that the closure $\overline{U}$ of $U$ is compact. 
Arguing by contradiction we assume that there is no $\delta_0 \in (0, \delta]$ having the property of the claim. 
Then we  find a sequence $\tau_{\nu} \in (0, \delta)$ such that $\tau_{\nu} \to 0^+$ as $\nu \to \infty$ and 
a sequence $(c_{\tau_{\nu}}, \eta_{\tau_{\nu}}) \in \crit ( \mathcal{A}^{F_{\tau}})$  satisfying $\left( \im (c_{\tau_{\nu}})  \times \{ \eta_{\tau_{\nu}} \}    \right) \cap \p \overline{U} \neq \emptyset$.
It follows that there exists a sequence $t_{\nu} \in [0,1]$ such that $\left( c_{\tau_{\nu}}( t_{\nu}) , \eta_{\tau_{\nu}} \right) \in \p \overline{U}$. 
Thanks to the compactness of $\overline{U}$ and  of the hypersurfaces $\Sigma_{\tau}$,
we are able to apply the Arzel\`a-Ascoli theorem to find $t_0 \in [0,1]$ and $(c_0', \eta_0') \in \crit( \mathcal{A}^{F_0})$ different from $(c_0, \eta_0)$ such that    $\left( c_{\tau_{\nu}}( t_{\nu}) , \eta_{\tau_{\nu}} \right) \to \left( c_{0}'( t_0) , \eta_0' \right) \in \p \overline{U}$   as  $\nu \to \infty.$
This contradicts to the property of $U$ that $\overline{U} \cap \im( \crit (\mathcal{A}^{F_0})) = \im ( c_0) \times \{ \eta_0 \}$ and proves the claim.

We fix $\tau_0 >0 $ small enough and let $(d_{\tau_0}, \sigma_{\tau_0})$ be a Reeb chord on $\Sigma_{\tau}$ obtained by the first assertion.
Since it is non-degenerate, making use of the implicit function theorem, see for example, \cite[Proposition 2, page 110]{HZbook} or \cite[Proposition B.1]{AF09negative},
we find $\tau_{-\infty} \in [0, \tau_0)$ such that $(d_{\tau_0}, \sigma_{\tau_0})$ extends to a maximal one-parameter family  $\{ (d_{\tau}, \sigma_{\tau})\}_{\tau \in ( \tau_{-\infty}, \tau_0]   }$ such that
for each $\tau  \in ( \tau_{-\infty}, \tau_0] $ the pair $(d_{\tau}, \sigma_{\tau})$ is a non-degenerate Reeb chord on $\Sigma_{\tau}$.
In view of the previous claim, each $\im (d_{\tau})  $ is confined to $\Sigma_{\tau} \cap U_1$.
It follows from the hypothesis that   all these Reeb chords are non-degenerate and $\tau_{-\infty} =0$.

Consider the $\omega$-limit set $\Omega$ of the family $\{ (d_{\tau}, \sigma_{\tau})\}_{\tau \in (  0, \tau_0]   }$. By definition, $\Omega$ consists of Reeb chords $(v, \zeta)$ on $\Sigma_0$ such that
there exists a sequence $\tau_{\nu} \in (0, \tau_0]$ having the properties that $\tau_{\nu} \to 0$ and $ ( d_{\tau}, \sigma_{\tau}) \to (v, \zeta)$ as $\nu \to \infty$.
\cite[Theorem A]{BFvK19} tells us that $\Omega$ is nonempty, compact, and connected. 
However, the degenerate chord $(c_0, \eta_0)$  is the only Reeb chord with image in $\Sigma_0 \cap U_1$ and with lengths in $U_2$, and hence we have $\Omega = \{ (c_0, \eta_0)\}$.

We have found a one-parameter family $\{ (x_{d_{\tau}}, 2 \sigma_{\tau})\}_{\tau \in (0, \delta_0)}$ of $\rho$-symmetric periodic orbits, with $\delta_0>0$ small enough, born out of the family $\{(x_{c_{\tau}}, 2\eta_{\tau})\}_{\tau \in (-\epsilon, \epsilon)}$. 
Suppose that $\mu (c_{\tau}, \eta_{\tau}) = k$ for $\tau <0$ and $\mu(c_{\tau}, \eta_{\tau}) = k+ p$ for some $p \in [ -\dim N +1, \dim N-1] \setminus \{ 0 \}$ for $\tau>0$, see Remark \ref{rmk:index}.
As above, if $\kappa>0$ is small enough, then we have
\begin{equation}\label{eq:RKFfhormula}
\FH_*^{\loc, \Z_2}(\widetilde{U};F_{-\kappa})\cong \FH_*^{\loc, \Z_2}(\widetilde{U};F_{\kappa}) = \begin{cases}
		\Z_2 & \text{if $*=k$},\\
		0 & \text{otherwise}.
	\end{cases}
\end{equation}
This tells us that for $\tau>0$ sufficiently small, 
we need at least two Reeb chords: one $(d_{\tau}, \sigma_{\tau})$ with index $k$ is the generator of $\FH_*^{\loc, \Z_2}(\widetilde{U};F_{\kappa})$ and the other $(d_{\tau}', \eta_{\tau}')$ satisfying one of the following
\begin{itemize}
\item It has index  $k+p-1$ and is killed by $(c_{\tau}, \eta_{\tau})$ in the construction of $ \FH_*^{\loc, \Z_2}(\widetilde{U};F_{\kappa}) $.
\item Its index is   $k+p+1$ and it kills $(c_{\tau}, \eta_{\tau})$  in the construction of $ \FH_*^{\loc, \Z_2}(\widetilde{U};F_{\kappa}) $.
\end{itemize}
 This proves the third assertion and finishes the proof of the theorem. 
\end{proof}

 \begin{proof}[Proof of Corollary \ref{thm: intromain}]
 
Applying Theorem \ref{thm: intromainthm} to the family $\{  (y_{\tau}, 4\eta_{\tau})\} _{\tau \in (-\epsilon, \epsilon)}$ by regarding it as a family of $\rho_2$-symmetric periodic orbits, we find  new $\rho_2$-symmetric periodic orbits $(x_{d_{\tau}}, 2\sigma_{\tau})$ for all $ \tau \in (0, \delta)$ and some small $\delta >0$. 
We observe that each $(x_{d_{\tau}}, 2\sigma_{\tau})$ is not $\rho_1$-symmetric. 
Indeed, assume that it is $\rho_1$-symmetric. Since   the Reeb chord $c_{1,{\tau}}$ is non-degenerate for all $\tau \in (-\epsilon, \epsilon)$, no bifurcation of $\rho_1$-symmetric periodic orbits occurs.
Hence we must have $x_{d_{\tau}}= y_{\tau}$ which is a contradiction.
It follows from the invariance of $F$ under $\rho_1$ that there has to  be another $\rho_2$-symmetric periodic orbit $ (\rho_1 \circ x_{d_\tau} ,2\sigma _\tau )$ for ${\tau \in (0, \delta)}$     by applying the $\rho_1$-symmetry. 

Assume  that all Reeb chords   with images in $\Sigma_{\tau} \cap U_1$ and with image in $U_2$ for $\tau  \in (0, \delta)$ are non-degenerate,  
 that $\Sigma_{\tau}$ is compact for all $\tau \in (0, \delta)$, 
 and that $\mu( c_{2,\tau}, 2\eta_{\tau}) =k$ for $\tau <0$ and $\mu(c_{2,\tau}, 2\eta_{\tau}) = k+p$ for $\tau >0$, where $p \in [-\dim N +1, \dim N -1] \setminus \{0\}$.
 We in particular have \eqref{eq:RKFfhormula}. 
 In the following we only consider the case that $p$ is positive. The case that $p$ is negative can be proved in a similar way. 
 As in the proof of Theorem \ref{thm: intromainthm},  there must be  a family of Reeb chords $\{(d_{\tau}^0, \sigma_{\tau}^0)\}_{\tau \in (0, \delta)}$ such that $\mu(d_{\tau}^0, \sigma_{\tau}^0) = k$ for all $\tau \in (0, \delta)$.
 In view of the presence of the $\rho_1$-symmetry, we obtain the family $\{(\rho_1 \circ d_{\tau}^0, \sigma_{\tau}^0)\}_{\tau \in (0, \delta)}$ whose elements have index equal to $k$ as well.
 Since the local eLRFH has a single generator in degree $k$, there must be  another family $\{(d_{\tau}^1, \sigma_{\tau}^1)\}_{\tau \in (0, \delta)}$ with index $k+1$
 and again by the $\rho_1$-symmetry, we also find the family $\{(\rho_1 \circ d_{\tau}^1, \sigma_{\tau}^1)\}_{\tau \in (0, \delta)}$ with index $k+1$.
 Repeating this argument, for each $N \in [k, k+p-1]$, we find two families of Reeb chords with index $N$ which are related by the $\rho_1$-symmetry. 
 Consider the families $\{( d_{\tau}^{p-1}, \sigma_{\tau}^{p-1})\}_{\tau \in (0, \delta)}$ and $\{(\rho_1 \circ d_{\tau}^{p-1}, \sigma_{\tau}^{p-1})\}_{\tau \in (0, \delta)}$ with index $k+p-1$.
 One of them kills one of two families of Reeb chords with index $k+p-2$ and the other is killed by $(c_{2,\tau}, 2\eta_{\tau})$ which has index $k+p$, see Figure \ref{fig:cor}.

The proofs for the remaining assertions are almost   identical to the proofs of the corresponding assertions in Theorem \ref{thm: intromainthm}.
 \end{proof}
 
  \begin{figure}[h]
\begin{center}
\begin{tikzpicture}[scale=0.8]
   \draw [fill] (0 ,0 ) circle [radius=0.05];
      \draw [fill] (0 ,-1 ) circle [radius=0.05];
   \draw [fill] (3 ,-1) circle [radius=0.05];
   \draw [fill] (0 ,-4 ) circle [radius=0.05];
   \draw [fill] (3 ,-4 ) circle [radius=0.05];
   \draw [fill] (0 ,-5 ) circle [radius=0.05];
   \draw [fill] (3 ,-5 ) circle [radius=0.05];
      \draw [fill] (0 ,-6 ) circle [radius=0.05];
   \draw [fill] (3 ,-6 ) circle [radius=0.05];
\draw[->] (0,0) to (1.5, -0.5);
\draw (3,-1) to (1.5, -0.5);
\draw[->] (0,-4) to (1.5, -4.5);
\draw (3,-5) to (1.5, -4.5);
\draw[->] (0,-5) to (1.5, -5.5);
\draw (3,-6) to (1.5, -5.5);
 \draw (0,-1) to (1, -1.3333);
\draw (3, -4) to (2,-3.6667);
   \draw [fill] (1.5 ,-1.9 ) circle [radius=0.02];
   \draw [fill] (1.5 ,-3.1 ) circle [radius=0.02];
   \draw [fill] (1.5 ,-2.2 ) circle [radius=0.02];
   \draw [fill] (1.5 ,-2.5 ) circle [radius=0.02];
   \draw [fill] (1.5 ,-2.8 ) circle [radius=0.02];
\node at (0,0)[left] {$ (c_{2, \tau}, 2\eta_{\tau})$};
\node at (0,-1)[left] {$ (d_{ \tau}^{p-1},  \sigma^{p-1}_{\tau})$};
\node at (3,-1)[right] {$ (\rho_1\circ d_{ \tau}^{p-1},  \sigma^{p-1}_{\tau})$};
\node at (0,-4)[left] {$ (d_{ \tau}^{ 2},  \sigma^{ 2}_{\tau})$};
\node at (3,-4)[right] {$ (\rho_1\circ d_{ \tau}^{ 2},  \sigma^{ 2}_{\tau})$};
\node at (0,-5)[left] {$ (d_{ \tau}^{ 1},  \sigma^{ 1}_{\tau})$};
\node at (3,-5)[right] {$ (\rho_1\circ d_{ \tau}^{ 1},  \sigma^{ 1}_{\tau})$};
\node at (0,-6)[left] {$ (d_{ \tau}^{0},  \sigma^{0}_{\tau})$};
\node at (3,-6)[right] {$ (\rho_1\circ d_{ \tau}^{0},  \sigma^{0}_{\tau})$};
\node at (-4, 0.8) {index};
\node at (1.5, 0.8) {Reeb chords};
\node at (-4, 0) {$k+p$};
\node at (-4, -1) {$k+p-1$};
\node at (-4, -4) {$k+2$};
\node at (-4, -5) {$k+1$};
\node at (-4, -6) {$k $}; 
   \draw [fill] (-4 ,-1.9 ) circle [radius=0.02];
   \draw [fill] (-4 ,-3.1 ) circle [radius=0.02];
   \draw [fill] (-4 ,-2.2 ) circle [radius=0.02];
   \draw [fill] (-4 ,-2.5 ) circle [radius=0.02];
   \draw [fill] (-4 ,-2.8 ) circle [radius=0.02];
\end{tikzpicture}
\end{center}
\caption{An illustration of  the proof of Corollary \ref{thm: intromain}. The arrows denote the boundary operator.}
\label{fig:cor}
\end{figure}
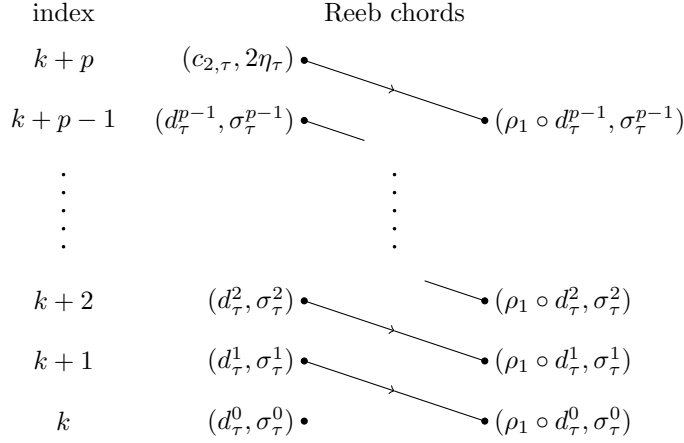

\begin{proof}[Proof of Theorem \ref{thm: intromainthm2}]
Arguing as above, if   $  (x_{c_{\tau}}, 2\eta_{\tau})  $ were the unique symmetric periodic orbit with image  in  $U_1$ and with lengths in $U_2$ for  all  $\tau>0$ small enough,  then
the local eLRFH would be a one-dimensional vector space. 
But for $\tau<0$ the local eLRFH is either trivial or   two-dimensional. 
This contradiction shows the existence of an additional family $\{(x_{d_{\tau}}, 2 \sigma_{\tau})\}_{\tau \in (0, \delta)}$ of symmetric periodic orbits for some $\delta>0$ small enough, which proves the first assertion.
The remaining proof  is  almost  identical to  the proof of Theorem  \ref{thm: intromainthm}.  This completes the proof of the theorem.
\end{proof}

\begin{proof}[Proof of Theorem \ref{thm: intromainthm1}]
As in the previous proof, we only prove the first assertion. 
If $ (x_{c_{\tau}}, 2\eta_{\tau}) $ were the unique symmetric periodic orbit  with image  in in $U_1$ and with lengths in $U_2$  for each $\tau<0$ close enough to $0$, then
the local eLRFH would be a one-dimensional vector space.
But this is not the case since  the third hypothesis on the family $\{  (x_{c_{\tau}}, 2\eta_{\tau}) \}_{\tau \in (-\epsilon, 0]}$ shows that the local eLRFH is trivial for $\tau >0$. 
Therefore, we find another family $\{(x_{d_{\tau}},2 \sigma_{\tau} )\}_{\tau \in (-\epsilon' ,0]}$ of $\rho$-symmetric periodic orbits for some $\epsilon' \in (0, \epsilon]$ such that $(x_{d_0} , 2\sigma_0 ) = (x_{c_0}, 2\eta_0)$.  
This finishes the proof of the theorem.
\end{proof}

 \bibliographystyle{abbrv}
\bibliography{mybibfile}

\end{document}